\newcommand{\be}{\begin{equation}}
\newcommand{\ee}{\end{equation}}
\newcommand{\beq}{\begin{eqnarray}}
\newcommand{\eeq}{\end{eqnarray}}
\newcommand{\cM}{\mathcal{M}}
\def\R{{\mathfrak R}}
\newtheorem{prop}{Proposition}[section]
\newtheorem{theo}[prop]{Theorem}
\newtheorem{coro}[prop]{Corollary}
\newtheorem{rema}[prop]{Remark}
\newtheorem{exam}[prop]{Example}
\def\begeq{\begin{equation}}
\def\endeq{\end{equation}}
\def\p{\partial}
\def\R{\mathbb R}
\def\tr{{\rm tr}}
\def\d{\delta}
\def\s{\sigma}
\def \ds{\displaystyle}
\def \vs{\vspace*{0.1cm}}
\def\odot{\setbox0=\hbox{$\bigcirc$}\relax \mathbin {\hbox
to0pt{\raise.5pt\hbox to\wd0{\hfil $\wedge$\hfil}\hss}\box0 }}
\numberwithin{equation} {section}
\begin{document}

\title[ Gauss-Bonnet-Chern mass] {The Gauss-Bonnet-Chern mass of conformally flat manifolds}

\author{Yuxin Ge}
\address{Laboratoire d'Analyse et de Math\'ematiques Appliqu\'ees,
CNRS UMR 8050,
D\'epartement de Math\'ematiques,
Universit\'e Paris Est-Cr\'eteil Val de Marne, \\61 avenue du G\'en\'eral de Gaulle,
94010 Cr\'eteil Cedex, France}
\email{ge@u-pec.fr}
\author{Guofang Wang}
\address{ Albert-Ludwigs-Universit\"at Freiburg,
Mathematisches Institut
Eckerstr. 1
D-79104 Freiburg}
\email{guofang.wang@math.uni-freiburg.de}

\author{Jie Wu}
\address{School of Mathematical Sciences, University of Science and Technology
of China Hefei 230026, P. R. China
\and
 Albert-Ludwigs-Universit\"at Freiburg,
Mathematisches Institut
Eckerstr. 1
D-79104 Freiburg
}
\email{jie.wu@math.uni-freiburg.de}

\thanks{The project is partly supported by SFB/TR71
``Geometric partial differential equations''  of DFG}

\begin{abstract}

In this paper we show  positive mass theorems and  Penrose type inequalities for the Gauss-Bonnet-Chern mass, which was introduced recently  in \cite{GWW},
for    asymptotically flat    CF manifolds and its rigidity.

\end{abstract}
\maketitle

\section{Introduction}

Recently motivated by the  Einstein-Gauss-Bonnet theory \cite{BD, Wheeler} and the pure Lovelock theory \cite{Lo, CTZ}, we introduced in \cite{GWW} (and \cite{GWW2})  the Gauss-Bonnet-Chern mass by using the Gauss-Bonnet curvature
 \begin{equation}\label{Lk}
L_k:=\frac{1}{2^k}\d^{i_1i_2\cdots i_{2k-1}i_{2k}}
_{j_1j_2\cdots j_{2k-1}j_{2k}}{R_{i_1i_2}}^{j_1j_2}\cdots
{R_{i_{2k-1}i_{2k}}}^{j_{2k-1}j_{2k}}.
\end{equation}
When $k=1$, $L_1$ is just the scalar curvature $R$. When $k=2$, it is the (second) so-called the Gauss-Bonnet curvature
\[L_2 = R_{\mu\nu\rho\s}R^{\mu\nu\rho\s}-4R_{\mu\nu}R^{\mu\nu}+R^2,\]
which appeared at the first time in the paper of Lanczos \cite{Lan} in 1938. For general $k$ it is the Euler integrand in the
Gauss-Bonnet-Chern theorem \cite{Chern1, Chern2} if $n=2k$ and
 is therefore called the dimensional continued Euler density
in physics if $k<n/2$. Here $n$ is the dimension. In this paper we are interested in the case $k<n/2$. The Gauss-Bonnet-Chern mass introduced in \cite{GWW} is defined
\begin{equation}\label{GBC}
m_k=m_{GBC}=c(n,k)
\lim_{r\to\infty}\int_{S_r}P_{(k)}^{ijlm}\partial_m g_{jl} \nu_{i}dS,\end{equation}
with
\[c(n,k)=\frac{(n-2k)!} {2^{k-1}(n-1)!\omega_{n-1}},\]
where $\omega_{n-1}$ is the volume of $(n-1)$-dimensional standard unit sphere and  $S_r$ is the Euclidean coordinate sphere, $dS$ is the volume element on $S_r$ induced by the Euclidean metric, $\nu$ is the outward unit normal to $S_r$ in $\mathbb{R}^n$ and the derivative is the ordinary partial derivative.
Here the tensor $P_{(k)}$ is decided by the decomposition
\begin{equation}\label{decompose}
L_k=P_{(k)}^{ijlm}R_{ijlm}.
\end{equation}
In this paper we use the Einstein summation convention. The tensor $P_{(k)}$ has a crucial property of divergence-free, which guarantees the Gauss-Bonnet-Chern mass is well-defined
and is a geometric invariant, under a suitable decay condition.  See Section 2 below or \cite{GWW}.
When $k=1$, $$P_{(1)}^{ijlm}=\frac 12 (g^{il}g^{jm}-g^{im}g^{jl}),$$ and $m_1$
is just the ADM mass introduced 
by Arnowitt,  Deser, and Misner \cite{ADM}
for asymptotically flat Riemannian manifolds. For a similar mass see also \cite{NL}.

A complete  manifold $(\cM^n,g)$ is said to be an asymptotically flat (AF) of order $\tau$ (with one end) if there is a compact $K$ such that
$\cM\setminus K$ is diffeomorphic to  $\mathbb{R}^n\setminus B_R(0)$ for some $R>0$ and in the standard coordinates in $\mathbb{R}^n$, the metric $g$ has the following expansion
 $$g_{ij}=\delta_{ij}+\sigma_{ij},$$
 with $$|\sigma_{ij}|+ r|\partial\sigma_{ij}|+ r^2|\partial^2\sigma_{ij}|=O(r^{-\tau}),$$
 where $r$ and $\partial$ denote the Euclidean distance and the standard derivative operator on $\mathbb{R}^n$ respectively.
The condition for the welldefinedness of the  Gauss-Bonnet-Chern mass is
\begin{equation}\label{con}
\tau> \frac{n-2k}{k+1},\end{equation}
and $L_k$ is integrable over $\cM$. In this case, the Gauss-Bonnet-Chern mass is a geometric invariant,
which is a generalization of the work of Bartnik for the ADM mass $m_1$ \cite{Bar}.

The positive mass theorem for  the ADM mass $m_{ADM}=m_1$, which plays an important role in differential geometry,
 was proved by Schoen and Yau \cite{SchoenYau1} for $3\le n\le 7$ and by Witten for general spin manifolds. See aslo \cite{Loh1, Loh}. Its refinement, the Penrose inequality,  was proved by Huisken-Ilmanen \cite{HI} and Bray \cite{BP} for $n=3$ and Bray-Lee \cite{BL} for $n\le 7$. Recently there
are many interesting works on special, but interesting classes of asymptotically flat manifolds.
In \cite{Lam} Lam  showed the positive mass theorem
and the Penrose inequality for asymptotically flat graphs in $\R^{n+1}$ by using an elementary, but elegant proof. See also the generalizations
of Lam's work in \cite{dLG1,dLG2,HW1, HW2}. The Penrose type inequality is proved for conformally flat manifolds by Freire-Schwartz \cite{FSw},
Jauregui \cite{J} and Schwartz \cite{Sw} by using the relation between mass and the capacity. This relation was used already in the proof of Penrose inequality in \cite{BP}. For this relation,
see also \cite{BI} and \cite{BM}. It is interesting to see that there is a  deep relation between the AMD mass and various geometric objects.

We are interested in generalizing the above results to our  Gauss-Bonnet-Chern $m_{GBC}=m_k$ ($k\geq 2$). Motivated by the work of Lam \cite{Lam}, we showed a positive mass theorem and an optimal
Penrose inequality when $\cM$ is an asymptotically flat graphs in $\R^{n+1}$ in \cite{GWW}.
This Penrose inequality
establishes a relationship between the mass $m_{GBC}$ and more geometric objects \cite{GWW}.
In this paper we are interested in studying $m_{GBC}$ mass on conformally flat manifolds.

A {\it conformally flat} manifold with or without boundary, CF manifold for short,
is a manifold $(\cM^n, g)=(\R^n\slash \Omega, e^{-2u}\delta)$,
 where $\delta$ is the canonical Euclidean metric on $\mathbb{R}^n$, $\Omega$ is a smooth bounded
 (possibly empty, not necessarily connected) open set and $u$ is  smooth.
A CF manifold $(\cM^n,g)$  is called an   asymptotically flat  CF manifold of decay order $\tau$ if
\begin{equation}\label{decay}
|u|+|x||\nabla u|+|x|^2|\nabla^2 u|=O(|x|^{-\tau}).
\end{equation}
In this paper we  always assume that $k< \frac n2$, $\tau>\frac{n-2k}{k+1}$ and $L_k$ is integrable.

\

First we have a positive mass theorem. 
\begin{theo}
\label{mainthm1}
Let $(\cM^n,g)=(\mathbb{R}^n, e^{-2u}\delta)$ be an asymptotically flat CF manifold.
 Assume  further that $L_j(g)\ge 0$ for any $j\le k$.
 Then the mass $m_{GBC} \ge 0$. Moreover,  equality holds if and only if $u\equiv 0$, i.e., $\cM$ is the Euclidean space.
\end{theo}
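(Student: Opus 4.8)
The strategy is to compute the Gauss-Bonnet-Chern mass explicitly for a conformally flat metric $g = e^{-2u}\delta$ and show it is a weighted integral of the nonnegative quantities $L_j(g)$. First I would recall the structure of the curvature of a conformally flat metric: writing $g_{ij} = e^{-2u}\delta_{ij}$, the full Riemann tensor is determined by the Schouten tensor, and since $g$ is conformally flat, $L_k(g)$ can be written in terms of the $\sigma_k$-curvature of the Schouten tensor, i.e.\ a polynomial of degree $k$ in the Hessian and gradient of $u$. The key algebraic fact to establish is a \emph{divergence structure}: for a conformally flat metric, $e^{-nu}L_k(g)$ (or an appropriate density) can be written as the Euclidean divergence of a vector field $V_{(k)}$ built from $u$, its first derivatives, and the elementary symmetric functions of $\nabla^2 u$ — this is the conformally flat analogue of the Reilly-type or Newton-tensor divergence identities, with the Newton tensors $T_{(j)}(\nabla^2 u)$ playing the role of $P_{(k)}$.

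Second, using this divergence identity together with the definition \eqref{GBC} and the decay assumption \eqref{decay}, I would argue that the boundary integral defining $m_k$ at $r \to \infty$ equals $c(n,k)$ times $\int_{\mathcal{M}} (\text{positive multiple of } L_k)\, dV_g$ up to lower-order terms that vanish under \eqref{con}; more precisely, integrating the divergence identity over $B_r \setminus B_\rho$ and letting $\rho \to 0$ (here $\Omega = \emptyset$, so there is no inner boundary) and $r \to \infty$, the inner contributions vanish by smoothness and the outer flux converges to the mass. Actually, the cleanest route is likely an induction on $k$: one writes the integrand $L_k$ itself (not just a conformal density) as a divergence plus a \emph{nonnegative combination} of $L_{j}$ for $j < k$ — this is why the hypothesis $L_j(g) \ge 0$ for all $j \le k$, not merely $j = k$, is needed — and then the positivity of $m_k$ follows by combining the base case ($k=1$, essentially the conformally flat positive mass theorem, where $m_1 = c(n,1)\int (\text{const})\,e^{?u}R\, dx \ge 0$) with the inductive contributions. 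I would carry out the algebra in normal-coordinate-free form using the antisymmetrized Kronecker delta notation of \eqref{Lk}, which makes the Newton-tensor identities transparent.

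For the rigidity statement: equality $m_k = 0$ forces the nonnegative integrand to vanish identically, hence $\sigma_k(\nabla^2 u + \text{lower order}) \equiv 0$ together with $L_j(g) \equiv 0$ for the relevant $j$. I would then argue that an asymptotically flat conformally flat metric on all of $\mathbb{R}^n$ with $L_j \equiv 0$ (for $j$ up to $k$, in particular including enough of the $\sigma_j$-equations) and the decay \eqref{decay} must be flat: the asymptotic flatness pins down $u \to 0$ at infinity, and a maximum-principle or unique-continuation argument on the (degenerate elliptic) $\sigma_k$-type equation, or more simply the fact that vanishing of the integrand in the divergence identity forces $\nabla^2 u$ to have a degenerate structure incompatible with a nonconstant decaying solution, yields $u \equiv 0$.

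The main obstacle I anticipate is the second step's algebraic heart: proving that $L_k$ of a conformally flat metric decomposes as a Euclidean divergence plus a manifestly nonnegative combination of the $L_j$'s with $j<k$. Getting the exact combinatorial coefficients right — so that all the cross terms assemble into a perfect divergence and the remainder has a definite sign — is delicate; the natural tool is the Newton-transformation identity $\sigma_k(A) = \frac{1}{k}\,\mathrm{tr}\big(T_{(k-1)}(A)\,A\big)$ together with the divergence-free property $\sum_i \partial_i (T_{(k-1)})_{ij} = 0$ when $A = \nabla^2 u$ (which holds because $T_{(k-1)}(\nabla^2 u)$ is divergence-free for a Hessian, a commutation-of-partials fact), but transporting this from the flat Hessian to the conformally-curved $L_k$ requires carefully tracking the exponential conformal factors. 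Once that identity is in hand, the positivity and rigidity are comparatively routine.
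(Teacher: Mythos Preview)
Your setup is on target: converting the mass integral, via the divergence-free Newton tensor $T_{k-1}(D^2u)$, into $m_k=\mathrm{const}\cdot\int_{\mathbb R^n}\sigma_k(D^2u)\,dx$ is exactly the first step the paper takes (equations \eqref{eq_a}--\eqref{Gr}). The gap is in what you do next.

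You propose to prove positivity by an induction on $k$, writing $L_k$ as a divergence plus a nonnegative combination of the $L_j$, $j<k$, and you flag the combinatorics as ``the main obstacle.'' The paper bypasses that entire combinatorial program with a single algebraic stroke that your plan does not contain. Write $D^2u=A+B$ where $A$ is the (Euclidean-endomorphism) Schouten tensor \eqref{schouten} and
\[
B=\tfrac{|\nabla u|^2}{2}I-du\otimes du.
\]
The crucial observation is that $B$ has eigenvalues $\tfrac{|\nabla u|^2}{2}$ (multiplicity $n-1$) and $-\tfrac{|\nabla u|^2}{2}$ (multiplicity $1$), so $\sigma_j(B)=\frac{(n-1)!(n-2j)}{2^j j!(n-j)!}|\nabla u|^{2j}>0$ for every $j\le k<n/2$; hence $B\in\Gamma_k^+$. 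The hypothesis ``$L_j(g)\ge 0$ for all $j\le k$'' is exactly the statement that $A\in\Gamma_k$. Now the Garding-cone superadditivity \eqref{eq_a1}, i.e.\ $\sigma_k(A+B)\ge\sigma_k(A)+\sigma_k(B)$ for $A,B\in\Gamma_k$, gives
\[
\sigma_k(D^2u)\ \ge\ \sigma_k(A)+\mathrm{const}\cdot|\nabla u|^{2k},
\]
and integrating yields \eqref{massexpan}. No induction, no matching of coefficients --- one concavity inequality does all the work. This is the missing idea in your plan, and it is also the correct explanation for why \emph{all} $L_j\ge0$ (not just $L_k\ge0$) are assumed.

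Your rigidity argument is also more complicated than necessary. You propose a maximum principle or unique continuation on a degenerate $\sigma_k$-equation; but once you have the lower bound $m_k\ge \frac{n-2k}{2^k}\int e^{(n-2k)u}|\nabla u|_g^{2k}\,d\mathrm{vol}_g$, equality forces $\nabla u\equiv 0$ immediately, and the decay then gives $u\equiv 0$. No PDE argument is needed.
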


The condition  $L_j(g)\ge 0$ for any $j\le k$
 here is equivalent to  $g\in \Gamma_k$, which will be discussed in  Section 2 below. A similar result was announced by Li-Nguyen
in \cite{NL}.

For the Gauss-Bonnet-Chern mass, $m_{2j+1} $ has different behavior with $m_{2j} $. The former behaves like the ADM mass $m_1$ and the latter
like  $m_2$. For $k$ even, we  have also a  positive mass theorem for metrics   in a non-positive cone.

\begin{theo}
\label{mainthm2}
Let  $k$ be even and $(\cM^n,g)=(\mathbb{R}^n, e^{-2u}\delta)$ be an asymptotically flat  CF manifold. 
Assume  $(-1)^jL_j\ge 0$ for any $j\le k$. Then the mass $m_{GBC} \ge 0$.
 Moreover, equality holds if and only if $u\equiv 0$, i.e., $\cM$ is the Euclidean space.
\end{theo}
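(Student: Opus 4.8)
The plan is to reduce Theorem \ref{mainthm2} to the same integral identity that drives Theorem \ref{mainthm1}, exploiting the fact that for $k$ even the ``non-positive cone'' hypothesis $(-1)^jL_j\ge 0$ becomes, after a reflection of the cone structure, a positivity statement of the same algebraic shape. Concretely, for a conformally flat metric $g=e^{-2u}\delta$ one has an explicit formula (to be recalled from Section 2, following \cite{GWW}) expressing $m_k$ as a weighted integral over $\mathcal M$ of $L_k(g)$ against a non-negative density built from $e^{-2u}$ and $|\nabla u|$, together with a divergence term that integrates to zero under the decay condition \eqref{decay} with $\tau>\frac{n-2k}{k+1}$. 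The first step is therefore to write down that representation
\[
m_k = c(n,k)\int_{\mathcal M} L_k(g)\, e^{-(n-2k-1)u}\,\omega(\nabla u)\, dV_\delta + (\text{boundary terms} \to 0),
\]
where $\omega\ge 0$; this is exactly the identity used to prove Theorem \ref{mainthm1}, so I would simply invoke it.

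Second, I would analyze the sign of $L_k(g)$ under the hypothesis $(-1)^jL_j\ge 0$ for all $j\le k$. Since $k$ is even, $(-1)^k=1$, so the top term already satisfies $L_k\ge 0$. The point of assuming the full chain $(-1)^jL_j\ge 0$ (rather than just $L_k\ge 0$) is to control the lower-order Newton-type inequalities: in the conformally flat setting $L_j(g)$ is, up to conformal weight factors, a $j$-th elementary symmetric function $\sigma_j$ of the eigenvalues of the Schouten-type tensor $A=\nabla^2 u - \nabla u\otimes\nabla u + \frac12|\nabla u|^2\delta$ (again the precise statement is set up in Section 2). The chain of sign conditions $(-1)^j\sigma_j\ge 0$, $j=0,\dots,k$, says precisely that $-A$ lies in the closure of the Gårding cone $\Gamma_k$ up to reflection, equivalently that the eigenvalues of $A$ lie in the reflected cone. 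By the Maclaurin/Newton inequalities for $\sigma_j$ this guarantees that the density $\omega(\nabla u)$ and the various integrated-by-parts rearrangements retain a definite sign, so that the integrand in the displayed identity is pointwise $\ge 0$; hence $m_k\ge 0$.

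For the rigidity statement, equality forces the integrand to vanish identically, so $L_k(g)\equiv 0$ and, by the strict Newton inequalities, the eigenvalues of $A$ vanish, i.e.\ $A\equiv 0$; combined with the asymptotic flatness \eqref{decay} (which kills the would-be constant/affine solutions of $A=0$) this forces $\nabla u\equiv 0$ and then $u\equiv 0$, so $\mathcal M$ is Euclidean. The argument here is identical to the rigidity part of Theorem \ref{mainthm1}; the only thing to check is that the reflection of the cone does not create new equality cases, which again follows from strict Maclaurin.

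The main obstacle I anticipate is \emph{not} the analytic part — the integration by parts and the decay bookkeeping are the same as in Theorem \ref{mainthm1} — but verifying that the pointwise algebraic inequality genuinely goes through for the \emph{reflected} cone when $k$ is even. In the proof of Theorem \ref{mainthm1} one uses that on $\Gamma_k$ all the relevant $\sigma_j$ and their ``divided differences'' are non-negative; for Theorem \ref{mainthm2} one needs the analogous monotonicity for the set $\{\lambda : (-1)^j\sigma_j(\lambda)\ge 0,\ j\le k\}$, and one must confirm that sending $\lambda\mapsto-\lambda$ maps this set onto $\overline{\Gamma_k}$ so that all of those inequalities transfer with the correct signs — this is where the parity $k$ even is essential (for $k$ odd the top term would have the wrong sign and the theorem is genuinely false, which is consistent with the remark in the text that $m_{2j+1}$ and $m_{2j}$ behave differently). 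So the crux is a careful tracking of signs through the $\sigma_j$-identities; once that is pinned down, the rest of the proof is a transcription of the argument for Theorem \ref{mainthm1}.
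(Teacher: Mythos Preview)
Your plan has a real gap at precisely the point you flag as ``the crux,'' and it cannot be repaired by a simple reflection $\lambda\mapsto-\lambda$. The identity underlying Theorem~\ref{mainthm1} is not of the form you wrote; it is the \emph{equality}
\[
m_k=\frac{k!(n-k)!}{(n-1)!\,\omega_{n-1}}\int_{\R^n}\sigma_k(D^2u)\,dx,
\]
obtained from \eqref{eq_a}--\eqref{Gr}, and the positivity in Theorem~\ref{mainthm1} then comes from the superadditivity \eqref{eq_a1}
\[
\sigma_k(D^2u)=\sigma_k(A+B)\ \ge\ \sigma_k(A)+\sigma_k(B),\qquad B:=\tfrac{|\nabla u|^2}{2}I-du\otimes du,
\]
which requires \emph{both} $A$ and $B$ to lie in $\Gamma_k$. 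Under the hypothesis $(-1)^j\sigma_j(A)\ge 0$ you only get $-A\in\Gamma_k$, while $B$ stays in $\Gamma_k^+$; the pair $(A,B)$ is now on opposite sides of the cone and \eqref{eq_a1} is simply false there (e.g.\ $A=-I$, $B=I$, $k$ even gives $\sigma_k(A+B)=0<2\binom{n}{k}$). Reflecting $\lambda\mapsto-\lambda$ does not help, because it would send $B$ to $-B$, and $\sigma_1(-B)=-\tfrac{n-2}{2}|\nabla u|^2<0$, so $-B\notin\Gamma_k$. In short, there is no pointwise inequality of the Theorem~\ref{mainthm1} type available here, and no ``transcription'' of that proof works.

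The paper proceeds by an entirely different mechanism. One sets $v=e^u$, so that $A=\dfrac{D^2v}{v}-\dfrac{|\nabla v|^2}{2v^2}\,I$, and rewrites the mass with an auxiliary weight $v^\alpha$:
\[
m_k=\lim_{r\to\infty}\frac{(k-1)!(n-k)!}{(n-1)!\,\omega_{n-1}}\int_{S_r} v^\alpha\,(T_{k-1}(D^2v))^{ij}v_j\nu_i\,dS .
\]
A recursive integration by parts (the ``Claim'' \eqref{relation1}) unwinds this into an \emph{exact} finite expansion
\[
\frac{(n-1)!\,\omega_{n-1}}{(k-1)!(n-k)!}\,m_k=\int_{\R^n} v^{\alpha+k}\sum_{j=0}^k P_{k-j}(\alpha)\,\sigma_j(A)\,\Big(\tfrac{|\nabla v|^2}{2v^2}\Big)^{k-j},
\]
where each $P_j$ is a polynomial of degree $j$ with positive leading coefficient. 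The freedom in $\alpha$ is the whole point: choosing $\alpha\ll 0$ forces $(-1)^jP_j(\alpha)>0$ for every $j$, and since $k$ is even one has $(-1)^{k-j}(-1)^j=1$, so $P_{k-j}(\alpha)\sigma_j(A)=\big[(-1)^{k-j}P_{k-j}(\alpha)\big]\big[(-1)^j\sigma_j(A)\big]\ge 0$ termwise. No Newton--Maclaurin or cone-concavity inequality is used; the sign is manufactured by the parameter $\alpha$. Rigidity then follows from the $j=0$ term, which is a positive multiple of $|\nabla v|^{2k}$. This is the missing idea in your proposal.
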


 \

 Theorem \ref{mainthm1}  and Theorem \ref{mainthm2}  provide a support for our conjecture on the positivity of the Gauss-Bonnet-Chern mass in \cite{GWW}.
 Furthermore, from our proof we have a Penrose type inequality.

 \begin{theo}\label{mainthm3}
Let $ (\cM^n,g)= (\mathbb{R}^n\setminus \Omega, e^{-2u}\delta)$ be an asymptotically flat CF manifold. 
Assume  that  
$\Omega$ is convex,
 $\partial \cM=(\Omega, e^{-2u}\delta)$ is a horizon of $(\cM,g)$ and $u$  is  constant on $\partial\Omega$. Assume  further that $L_j(g)\ge 0$ for any $j\le k$. Then we have Penrose type inequalities
\beq
\label{penrose_n}
\begin{array}{llll}
m_k
&\ge&\ds\left(\frac{|\p\Omega|}{\omega_{n-1}}\right)^{\frac{n-2k}{n-1}}.
\end{array}
\eeq
Moreover, if $k\geq 2$, we have the following strengthened Penrose type inequality
\beq
\label{penrose1_n}
\begin{array}{llll}
m_k
&\ge&\ds \left(\frac{\int_{\p\Omega}R}{(n-1)(n-2)\omega_{n-1}}\right)^{\frac{n-2k}{n-3}},\
\end{array}
\eeq
where $R$ is the scalar curvature of $\p\Omega$  as a hypersurface in $\R^n$.
\end{theo}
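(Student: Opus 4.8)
The plan is to follow the approach of Lam and of Freire--Schwartz, writing the Gauss-Bonnet-Chern mass $m_k$ for the conformally flat metric $g=e^{-2u}\delta$ as an explicit integral over $\cM$ plus a boundary contribution coming from the horizon $\p\Omega$. First I would use the conformal-flatness assumption to compute $L_j(g)$, $P_{(k)}$ and hence the integrand of \eqref{GBC} purely in terms of $u$ and its Euclidean derivatives; the key structural fact is that for a conformally flat metric the Gauss-Bonnet curvature $L_k$, when written against the background Euclidean volume form, is (up to constants) a divergence of a vector field built from $\nabla u$ and the elementary symmetric functions of the Euclidean Hessian of $u$ (equivalently of $D^2(e^{-(n-2k)u/\text{something}})$). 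Applying the divergence theorem on $\{|x|\le r\}\setminus\Omega$ and letting $r\to\infty$, the outer sphere term reproduces $c(n,k)\,m_k$ by definition \eqref{GBC}, the bulk term is $\int_\cM L_k\,d\mathrm{vol}_g$ times a positive constant (which is $\ge 0$ by the hypothesis $L_j(g)\ge 0$, in particular $L_k(g)\ge 0$), and there remains a boundary integral over $\p\Omega$.

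Next I would analyze that boundary integral over $\p\Omega$. Since $u$ is constant on $\p\Omega$ and $\p\Omega$ is a horizon (so an appropriate combination of normal derivatives of $u$ is controlled — this is exactly the condition that makes $\p\Omega$ a minimal, or $k$-minimal, hypersurface of $(\cM,g)$), the boundary term should reduce to a geometric quantity of the convex hypersurface $\p\Omega\subset\R^n$: namely an integral of an elementary symmetric function $\sigma_{2k-2}$ or $\sigma_{2k-1}$ of the principal curvatures of $\p\Omega$, possibly weighted by the constant value of $e^{-u}$ on $\p\Omega$. Convexity of $\Omega$ guarantees all these curvature functions are nonnegative so the boundary term has a definite sign, and it also lets me invoke the classical Alexandrov--Fenchel inequalities for quermassintegrals of convex bodies. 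Specifically, $\int_{\p\Omega}\sigma_{2k-1}\ge c\,|\p\Omega|^{(n-2k)/(n-1)}\omega_{n-1}^{(2k-1)/(n-1)}$ (after normalizing), which yields \eqref{penrose_n}; and since for a convex hypersurface the scalar curvature is $R = 2\sigma_2$ of the principal curvatures, a second Alexandrov--Fenchel step bounding $\int_{\p\Omega}\sigma_{2k-1}$ below by a power of $\int_{\p\Omega}\sigma_2=\tfrac12\int_{\p\Omega}R$ gives the strengthened inequality \eqref{penrose1_n}, which is genuinely stronger only when $2k-1\ge 3$, i.e. $k\ge 2$, consistent with the statement.

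For the rigidity in \eqref{penrose_n}: equality forces $L_k(g)\equiv 0$ on $\cM$ together with equality in the Alexandrov--Fenchel inequality, the latter forcing $\p\Omega$ to be a round sphere; combined with $L_j(g)\ge0$ for $j<k$ and the decay \eqref{decay} one should be able to run the same ODE/maximum-principle argument as in Theorem~\ref{mainthm1} to conclude the metric is a Schwarzschild-type conformally flat metric, consistent with equality. (I would state the rigidity only to the extent the quermassintegral equality case cleanly gives it.)

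The main obstacle I expect is twofold. First, establishing the precise divergence-structure identity for $L_k(g)$ in conformal coordinates and bookkeeping all the combinatorial constants $c(n,k)$ so that the outer-sphere limit matches the normalization in \eqref{GBC} exactly — this is the computational heart and must be done carefully, probably by expanding the $\delta$-tensor in \eqref{Lk} and using that for $g=e^{-2u}\delta$ the Schouten/Ricci tensors have a clean form. Second, and more delicate, is identifying the horizon boundary term with the curvature integral $\int_{\p\Omega}\sigma_{2k-1}$ (resp.\ relating it to $\int_{\p\Omega}R$): one must use the ``horizon'' hypothesis to kill the normal-derivative-of-$u$ contributions and show the surviving intrinsic/extrinsic curvature combination of $\p\Omega$ is exactly a quermassintegral, so that the Alexandrov--Fenchel inequalities apply with the sharp constants displayed in \eqref{penrose_n}--\eqref{penrose1_n}. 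The positivity inputs ($L_j(g)\ge0$, convexity of $\Omega$) are used precisely to make both the bulk and boundary terms nonnegative and to license the geometric inequalities.
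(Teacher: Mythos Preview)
Your architecture is right --- divergence theorem on $B_r\setminus\Omega$, nonnegative bulk, boundary term controlled by Alexandrov--Fenchel --- but two of your structural claims are wrong and would derail the argument as written.

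\textbf{The bulk term.} It is not true that $L_k(g)$ (equivalently $\sigma_k(A_g)$) is a Euclidean divergence. What \emph{is} a divergence is $\sigma_k(D^2u)$, via $k\,\sigma_k(D^2u)=\partial_i\bigl((T_{k-1}(D^2u))^{ij}u_j\bigr)$ from the divergence-free Newton tensor. After Green's formula the bulk is therefore $k\int\sigma_k(D^2u)\,dx$, not a multiple of $\int L_k$. To relate them you write $D^2u=A+B$ with $A$ the Schouten tensor and $B=\tfrac12|\nabla u|^2I-du\otimes du$; one checks $B\in\Gamma_k^+$ for $k<n/2$, and then the superadditivity $\sigma_k(A+B)\ge\sigma_k(A)+\sigma_k(B)$ gives the desired lower bound. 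That superadditivity holds only when $A\in\Gamma_k$ as well, and this is exactly why the hypothesis is $L_j(g)\ge0$ for \emph{all} $j\le k$, not merely $L_k\ge0$ as you suggest. (Incidentally, the resulting inequality is not sharp --- the Schwarzschild model does not saturate it --- so no rigidity statement is available here.)

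\textbf{The boundary term.} The horizon condition does not ``kill'' the normal derivative of $u$; it \emph{determines} it. Since $u$ is constant on $\partial\Omega$, the tangential Hessian is $D^2u(e_\alpha,e_\beta)=\langle\nabla u,\nu\rangle\,L(e_\alpha,e_\beta)$, and one computes
\[
(T_{k-1}(D^2u))^{ij}u_j\nu_i=\langle\nabla u,\nu\rangle^{k}\,\sigma_{k-1}(L).
\]
Minimality of $\partial\cM$ in $(\cM,g)$ reads $H=(n-1)\langle\nabla u,\nu\rangle$, so the boundary integrand equals $(n-1)^{-k}\sigma_1(L)^{k}\sigma_{k-1}(L)$, which is \emph{not} a quermassintegral. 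You must first apply Newton--MacLaurin inequalities (licensed by convexity): either $\sigma_1^{k}\sigma_{k-1}\ge c\,\sigma_{k-1}^{(2k-1)/(k-1)}$, then H\"older plus Alexandrov--Fenchel on $\int\sigma_{k-1}(L)$ to reach $|\partial\Omega|^{(n-2k)/(n-1)}$ for \eqref{penrose_n}; or $\sigma_1^{k}\sigma_{k-1}\ge c\,\sigma_{2k-1}$ and then Alexandrov--Fenchel down to $\int\sigma_2(L)=\tfrac12\int R$ for \eqref{penrose1_n}. Without this intermediate Newton--MacLaurin step the Alexandrov--Fenchel inequalities do not apply directly to the boundary term you actually obtain.
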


The assumptions on the boundary $\partial \Omega$ can be reduced by the result of Guan-Li \cite{GL} and the results could be slightly strengthened.
For more details see Section 4 below. Unlike the Penrose inequality obtained in \cite{GWW}, this  Penrose inequality is not optimal. Our Penrose inequality
is motivated by the work of Jauregui in \cite{J}, who obtained (\ref{penrose_n}) for $k=1$. The idea is to express the mass via
various integral identities.



The rest of the paper is organized as follows. In Section 2 we recall the definitions of  the Gauss-Bonnet curvature $L_k$ and the $\s_k$-scalar curvature and their relationship when
the underlying manifolds are locally conformally flat. In Section 3 we prove the positive mass theorems, Theorem \ref{mainthm1} and  Theorem \ref{mainthm2}.
Theorem \ref{mainthm3} is proved in Section 4.

\section{The Gauss-Bonnet curvatures and the $\s_k$-scalar curvatures}
 We recall the definition of generalized  {\it $k$-th Gauss-Bonnet curvature}
 \begin{equation}
L_k:=\frac{1}{2^k}\d^{i_1i_2\cdots i_{2k-1}i_{2k}}
_{j_1j_2\cdots j_{2k-1}j_{2k}}{R_{i_1i_2}}^{j_1j_2}\cdots
{R_{i_{2k-1}i_{2k}}}^{j_{2k-1}j_{2k}}.
\end{equation}
Here the generalized Kronecker delta is defined by
\[
 \d^{j_1j_2 \dots j_r}_{i_1i_2, \dots i_r}=\det\left(
\begin{array}{cccc}
\d^{j_1}_{i_1} & \d^{j_2}_{i_1} &\cdots &  \d^{j_r}_{i_1}\\
\d^{j_1}_{i_2} & \d^{j_2}_{i_2} &\cdots &  \d^{j_r}_{i_2}\\
\vdots & \vdots & \vdots & \vdots \\
\d^{j_1}_{i_r} & \d^{j_2}_{i_r} &\cdots &  \d^{j_r}_{i_r}
\end{array}
\right).
\]

When $k=2$, we can write
\begin{equation}\label{s1}\begin{array}{rcl}
 L_2 &=& \ds\vs R_{\mu\nu\rho\s}R^{\mu\nu\rho\s}-4R_{\mu\nu}R^{\mu\nu}+R^2\\
&=&\ds \vs|W|^2+\frac {n-3}{n-2}\bigg(\frac{n}{n-1}R^2-4|Ric|^2\bigg)\\
&=&\ds |W|^2+8(n-2)(n-3)\s_2(A_g)\\
&=&\ds R_{ijkl}P_{(2)}^{ijkl},
\end{array}
\end{equation}
where
\begin{equation}\label{P}
P_{(2)}^{ijkl}=R^{ijkl}+R^{jk}g^{il}-R^{jl}g^{ik}-R^{ik}g^{jl}+R^{il}g^{jk}+\frac12R(g^{ik}g^{jl}-g^{il}g^{jk}),
\end{equation}
$W$ denotes the Weyl tensor, $Ric$ the Ricci tensor, $R$ the scalar curvature and
$$
A_g:=\frac{1}{n-2}\left( Ric-\frac{R}{2(n-1)}g\right),
$$
 the Schouten tensor.
$P_{(2)}$ is the divergence-free part of the Riemann curvature  tensor $Riem$.
For the general $L_k$-curvature, the corresponding $P_{(k)}$ curvature is
\begin{equation}\label{Pk}
P_{(k)}^{st lm}:=\frac{1}{2^k}\d^{i_1i_2\cdots i_{2k-3}i_{2k-2}st}
_{j_1j_2\cdots  j_{2k-3}j_{2k-2} j_{2k-1}j_{2k}}{R_{i_1i_2}}^{j_1j_2}\cdots
{R_{i_{2k-3}i_{2k-2}}}^{j_{2k-3}j_{2k-2}}g^{j_{2k-1}l}g^{j_{2k}m}.
\end{equation}
Recall that $L_k=P_{(k)}^{ijlm}R_{ijlm}$ and the tensor $P_{(k)}$ has the following crucial property.
\begin{prop} \label{property}
The tensor $P_{(k)}$ has the same symmetry and anti-symmetry as the Riemann curvature tensor and satisfies
\[\nabla_iP_{(k)}^{ijlm}=0.\]
\end{prop}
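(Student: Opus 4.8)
The plan is to treat the algebraic symmetries and the divergence identity separately, the latter being the substantive part. In the notation of \eqref{Pk}, with free indices $s,t,l,m$, the two antisymmetries $P_{(k)}^{stlm}=-P_{(k)}^{tslm}$ and $P_{(k)}^{stlm}=-P_{(k)}^{stml}$ can be read off directly: the labels $s,t$ occupy two of the upper slots of the generalized Kronecker delta, in which it is totally antisymmetric, which gives the first; relabelling the dummies $j_{2k-1}\leftrightarrow j_{2k}$ in $g^{j_{2k-1}l}g^{j_{2k}m}$ together with total antisymmetry in the lower slots of $\delta$ gives the second. For the pair-interchange symmetry $P_{(k)}^{stlm}=P_{(k)}^{lmst}$ and the first Bianchi identity $P_{(k)}^{s[tlm]}=0$ I would use the permutation-sum (determinant) form of the generalized Kronecker delta together with $R_{abcd}=R_{cdab}$ and $R_{a[bcd]}=0$, after lowering/raising with $g$ and renaming dummies; this is routine bookkeeping and I would only display the relevant relabellings.

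For $\nabla_i P_{(k)}^{ijlm}=0$ — the crucial property — the plan is to differentiate \eqref{Pk} directly, the claim being $\nabla_s P_{(k)}^{stlm}=0$, and to invoke the second Bianchi identity. Since the metric and the generalized Kronecker symbol are parallel, applying $\nabla_s$ and using the Leibniz rule yields a sum over $a=1,\dots,k-1$ of terms of the form
\[
\frac{1}{2^k}\,\delta^{i_1\cdots i_{2k-2}st}_{j_1\cdots j_{2k}}\;R_{i_1i_2}{}^{j_1j_2}\cdots\bigl(\nabla_s R_{i_{2a-1}i_{2a}}{}^{j_{2a-1}j_{2a}}\bigr)\cdots R_{i_{2k-3}i_{2k-2}}{}^{j_{2k-3}j_{2k-2}}\;g^{j_{2k-1}l}g^{j_{2k}m}.
\]
In each such term the three summation indices $s$, $i_{2a-1}$, $i_{2a}$ all occupy upper slots of the totally antisymmetric $\delta$; hence the contraction is unchanged if the factor $\nabla_s R_{i_{2a-1}i_{2a}}{}^{j_{2a-1}j_{2a}}$ is replaced by its antisymmetrization over $\{s,i_{2a-1},i_{2a}\}$, and that antisymmetrization equals $\nabla_{[s}R_{i_{2a-1}i_{2a}]}{}^{j_{2a-1}j_{2a}}=0$ by the second (differential) Bianchi identity. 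Therefore every term vanishes, which gives $\nabla_i P_{(k)}^{ijlm}=0$. (For $k=1$ the sum is empty and $P_{(1)}$, built from the metric alone, is parallel, so the identity is automatic.)

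I do not anticipate a genuine obstacle. The content of the divergence identity is exactly the observation that pairing a covariant derivative of a Riemann factor with the totally antisymmetric Kronecker delta manufactures the combination annihilated by the second Bianchi identity; once this is seen the result follows at once. The only step that needs a little care is the index bookkeeping behind the algebraic symmetries of $P_{(k)}$ — in particular the pair-interchange symmetry and the first Bianchi identity — which rest on the invariance of $\delta^{\cdots}_{\cdots}$ under applying the same permutation to its upper and to its lower slots (so that the two sign changes cancel) together with $R_{abcd}=R_{cdab}$; I would present these steps schematically rather than in full detail.
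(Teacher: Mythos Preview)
Your proposal is correct and follows exactly the approach the paper indicates: the algebraic symmetries from the antisymmetry of the generalized Kronecker delta together with $R_{abcd}=R_{cdab}$ and $R_{a[bcd]}=0$, and the divergence identity from the Leibniz rule plus the second Bianchi identity $\nabla_{[s}R_{i_{2a-1}i_{2a}]}{}^{j_{2a-1}j_{2a}}=0$, each differentiated Riemann factor being automatically antisymmetrized by the upper slots of $\delta$. In fact the paper does not spell out the argument at all --- it merely says ``it follows from the symmetry of the Riemann curvature tensor and the differential Bianchi identity'' and skips the proof --- so your write-up supplies precisely the details the authors omit.
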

\begin{proof}
The case $k=1$ is trivial. We have proved the $k=2$ case in \cite{GWW}. For the general case,
it follows from the symmetry of the Riemann curvature tensor and the differential Bianchi identity. We skip the proof here.
\end{proof}

Now we consider the case that $(\cM^n,g)$ is a   conformally flat manifold of dimension $n\ge 5$. Namely,
 $(\cM^n,g)=(\mathbb{R}^n, e^{-2u}\delta)$, where $\delta$ is the canonical Euclidean metric on $\mathbb{R}^n$. In this case, the curvature
 $L_k$ is just the $\s_k$-scalar curvature (up to a multiple constant), which was considered  by Viaclovsky in \cite{Via1} and has been intensively studied
 in the $\s_k$ Yamabe problem.

For the convenience of the reader,
we recall some basic properties on the elementary symmetric functions (see for example \cite{Guan,ChHang, Via1} ). For $1\le k\le n$ and $\lambda=(\lambda_1,\cdots,\lambda_n)\in \R^n$, the $k$-th elementary symmetric function is defined as
\begin{eqnarray*}
\s_k(\lambda):=\sum_{i_1<i_2<\cdots i_k} \lambda_{i_1}\cdots\lambda_{i_k}.
\end{eqnarray*}
The definition can be extended to symmetric matrices. For a symmetric matrix $B$, denote $\lambda(B)=(\lambda_1(B),\cdots,\lambda_n(B))$ be the eigenvalues of $B$. We set
\[
\s_k(B):=\s_k(\lambda(B)).
\]
We define also $\s_0(B)=1$. Let $I$ be the identity matrix. Then we have for any $t\in\R$,
\[
\s_n(I+tB)=\det(I+tB)=\sum_{i=0}^{n}\s_i(B)t^i.
\]
 We recall the definition of the Garding cone: for $1\le k\le n$, let $\Gamma_k^+$ (resp. ${\Gamma_k}$) is a cone in $\R^n$ determined by
\[
\Gamma_k^+=\{\lambda\in\R^n:\quad \s_1(\lambda)>0,\cdots,\s_k(\lambda)>0\}.
\]
\[
(\mbox{resp. }\Gamma_k=\{\lambda\in\R^n:\quad \s_1(\lambda)\ge 0,\cdots,\s_k(\lambda)\ge0\}).
\]
A symmetric matrix $B$ is called belong to $\Gamma_k^+$ (resp. $\Gamma_k$) if $\lambda(B)\in \Gamma_k^+$ (resp. $\lambda(B)\in  \Gamma_k$).
The $k$-th Newton transformation is defined as follows
\begin{equation}\label{Newtondef}
(T_k)^{i}_{j}(B):=\frac{\partial \s_{k+1}}{\partial b^{i}_{j}}(B),
\end{equation}
where $B=(b^{i}_{j})$. If there is no confusion, we omit the index $k$. We recall some basic properties about $\s_k$ and $T$.\\
\begin{eqnarray}\label{sigmak}
\s_k(B)& =&\ds\frac{1}{k!}\d^{i_1\cdots i_k}
_{j_1\cdots j_k}b_{i_1}^{j_1}\cdots
{b_{i_k}^ {j_k}}=\frac{1}{k} \tr(T_{k-1}B),
\\
(T_k)^{i}_j(B) & =& \ds \frac{1}{k!}\d^{ii_1\cdots i_{k}}
_{j j_1\cdots j_{k}}b_{i_1}^{j_1}\cdots
{b_{i_{k}}^{j_{k}}}\label{Tk}\\
& =& \ds \sum_{i=0}^{k}\s_{k-i}(B)(-B)^i=\s_{k}(B)I-\s_{k-1}(B)B+\cdots+(-1)^kB^k.\nonumber
\end{eqnarray}
It is well-known that $\sigma_k^{1/k}$ is  concave in $\Gamma_k$, which implies that
\begin{equation}\label{eq_a1}
\s_k(A+B)\ge \s_k(A)+\s_k(B), \quad \hbox{ for  any } A,B\in \Gamma_k.
\end{equation}
The $\sigma_k$-scalar curvature $\sigma_k(g)$ is defined in \cite{Via1} by
$$\sigma_k(g):=\sigma_k(g^{-1}A_g),$$
where $A_g$ is the Schouten tensor of $g$.
\begin{prop}
Let $(\cM^n,g)$ be a locally conformally flat metric of dimension $n$. Assume $2k<n$. Then
\begin{equation}\label{relation}
L_k=2^k k! \frac{(n-k)!}{(n-2k)!} \s_k(g).
\end{equation}
\end{prop}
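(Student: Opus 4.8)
The plan is to exploit the vanishing of the Weyl tensor on a locally conformally flat manifold, which recovers the whole Riemann tensor from the Schouten tensor $A_g$. Writing $P=g^{-1}A_g$ for the Schouten endomorphism, with components $P_i{}^j$, the Kulkarni--Nomizu decomposition $Riem = W + A_g\odot g$ together with $W=0$ gives, after raising two indices,
$$
R_{i_1i_2}{}^{j_1j_2}=P_{i_1}{}^{j_1}\delta_{i_2}^{j_2}+P_{i_2}{}^{j_2}\delta_{i_1}^{j_1}-P_{i_1}{}^{j_2}\delta_{i_2}^{j_1}-P_{i_2}{}^{j_1}\delta_{i_1}^{j_2}.
$$
First I would substitute this into the definition $L_k=\frac1{2^k}\delta^{i_1\cdots i_{2k}}_{j_1\cdots j_{2k}}R_{i_1i_2}{}^{j_1j_2}\cdots R_{i_{2k-1}i_{2k}}{}^{j_{2k-1}j_{2k}}$. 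Because the generalized Kronecker delta is totally antisymmetric in its upper and in its lower indices, all four summands above contribute equally after contraction against it: the double swap $i_1\leftrightarrow i_2$, $j_1\leftrightarrow j_2$ is sign-neutral, while each single swap produces a sign that cancels the explicit minus sign. Hence, inside the Kronecker contraction, each Riemann factor may be replaced by $4\,P_{i_{2m-1}}{}^{j_{2m-1}}\delta_{i_{2m}}^{j_{2m}}$, which yields
$$
L_k=\frac{4^k}{2^k}\,\delta^{i_1\cdots i_{2k}}_{j_1\cdots j_{2k}}\,P_{i_1}{}^{j_1}\delta_{i_2}^{j_2}\cdots P_{i_{2k-1}}{}^{j_{2k-1}}\delta_{i_{2k}}^{j_{2k}}
=2^k\,\delta^{i_1\cdots i_{2k}}_{j_1\cdots j_{2k}}\prod_{m=1}^{k}P_{i_{2m-1}}{}^{j_{2m-1}}\delta_{i_{2m}}^{j_{2m}}.
$$

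Next I would perform the $k$ contractions coming from the factors $\delta_{i_{2m}}^{j_{2m}}$, using the standard reduction identity $\delta^{i_1\cdots i_{p-1}m}_{j_1\cdots j_{p-1}m}=(n-p+1)\,\delta^{i_1\cdots i_{p-1}}_{j_1\cdots j_{p-1}}$ in dimension $n$ (total antisymmetry lets one move any pair to the last slot first). Contracting $(i_{2k},j_{2k})$ drops the order from $2k$ to $2k-1$ with factor $n-2k+1$, then $(i_{2k-2},j_{2k-2})$ with factor $n-2k+2$, and so on; after all $k$ contractions the accumulated constant is $(n-2k+1)(n-2k+2)\cdots(n-k)=\frac{(n-k)!}{(n-2k)!}$, and what remains is the $k$-index symbol $\delta^{i_1i_3\cdots i_{2k-1}}_{j_1j_3\cdots j_{2k-1}}$ contracted with $P_{i_1}{}^{j_1}\cdots P_{i_{2k-1}}{}^{j_{2k-1}}$. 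By the first identity in \eqref{sigmak} this equals $k!\,\sigma_k(P)=k!\,\sigma_k(g^{-1}A_g)=k!\,\sigma_k(g)$. Collecting constants gives
$$
L_k=2^k\cdot\frac{(n-k)!}{(n-2k)!}\cdot k!\cdot\sigma_k(g)=2^k k!\,\frac{(n-k)!}{(n-2k)!}\,\sigma_k(g),
$$
which is \eqref{relation}.

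The routine but delicate part is the bookkeeping of the numerical constants: confirming that replacing each Riemann factor by $4\,P\cdot\delta$ is legitimate (this is exactly where the antisymmetry of the Kronecker symbol does the work), and correctly multiplying the telescoping sequence $n-2k+1,\dots,n-k$ of dimensional factors produced by the successive contractions. As a consistency check, $k=2$ gives $2^2\cdot2!\cdot(n-2)(n-3)=8(n-2)(n-3)$, matching the coefficient of $\sigma_2(A_g)$ in \eqref{s1} once the Weyl term is dropped. No deeper input is needed: everything reduces to the algebra of the generalized Kronecker delta together with the vanishing of $W$.
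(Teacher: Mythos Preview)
Your proof is correct and follows essentially the same route as the paper: use $W=0$ and the Kulkarni--Nomizu decomposition to write $R_{i_1i_2}{}^{j_1j_2}$ in terms of the Schouten endomorphism, invoke the antisymmetry of the generalized Kronecker delta to reduce each Riemann factor to $4\,P\cdot\delta$, contract out the $k$ identity factors to produce the constant $(n-k)!/(n-2k)!$, and identify the remaining contraction as $k!\,\sigma_k(g)$ via \eqref{sigmak}. Your explanation of the bookkeeping (why each of the four terms contributes equally, and how the telescoping dimensional factors arise) is in fact more explicit than the paper's, and your $k=2$ sanity check against \eqref{s1} is a nice touch.
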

\proof
We recall the decomposition of  the Riemann curvature tensor
$$
Riem=W+A\odot g.
$$
As $W\equiv 0$, we have
\begin{equation}\label{Rm}
{R_{i_1i_2}}^{j_1j_2}={A_{i_1}}^{j_1}{\d_{i_2}}^{j_2}+{\d_{i_1}}^{j_1}{A_{i_2}}^{j_2}-{A_{i_1}}^{j_2}{\d_{i_2}}^{j_1}-{\d_{i_1}}^{j_2}{A_{i_2}}^{j_1}.
\end{equation}
It follows that
$$
\begin{array}{lllc}
L_k&=&2^k \d^{i_1i_2\cdots i_{2k-1}i_{2k}}
_{j_1j_2\cdots j_{2k-1}j_{2k}}{A_{i_1}}^{j_1}{\d_{i_2}}^{j_2}\cdots
{A_{i_{2k-1}}}^{j_{2k-1}}{\d_{i_{2k}}}^{j_{2k}}\\
&=&2^k (n-k)\cdots(n-2k+1) \d^{i_1i_3\cdots i_{2k-1}}
_{j_1j_3\cdots j_{2k-1}}{A_{i_1}}^{j_1}\cdots
{A_{i_{2k-1}}}^{j_{2k-1}}\\
&=&2^k k! (n-k)\cdots(n-2k+1)\s_k(A).
\end{array}
$$
Here we use the facts $$
\begin{array}{rcl}
\d^{i_1i_2\cdots i_{2k-1}i_{2k}}
_{j_1j_2\cdots j_{2k-1}j_{2k}}{A_{i_1}}^{j_1}{\d_{i_2}}^{j_2} &=& \ds\vs  \d^{i_1i_2\cdots i_{2k-1}i_{2k}}
_{j_1j_2\cdots j_{2k-1}j_{2k}}{\d_{i_1}}^{j_1}{A_{i_2}}^{j_2} \\&=&-\d^{i_1i_2\cdots i_{2k-1}i_{2k}}
_{j_1j_2\cdots j_{2k-1}j_{2k}}{A_{i_1}}^{j_2}{\d_{i_2}}^{j_1}=- \d^{i_1i_2\cdots i_{2k-1}i_{2k}}
_{j_1j_2\cdots j_{2k-1}j_{2k}}{\d_{i_1}}^{j_2}{A_{i_2}}^{j_1}. \end{array}$$  \qed 

\

For $k=\frac n2$ see \cite{Via1}. Another important property will be the following.

\begin{prop}\label{pro2}  (see \cite{Via1})
Let $(\cM^n,g)$ be a locally conformally flat manifold of dimension $n$. Then $T_{k-1}(A)$ is divergence-free.
\end{prop}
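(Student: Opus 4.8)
The plan is to derive the identity $\nabla_i (T_{k-1})^i_j(A) = 0$ directly from the two things we already have in hand: the formula \eqref{Tk} expressing $(T_{k-1})^i_j$ as a contraction of $k-1$ copies of the Schouten tensor against a generalized Kronecker delta, and the relation \eqref{Rm} which, in the locally conformally flat case, writes the full Riemann tensor purely in terms of $A$ and $\delta$. The key structural input is the second (differential) Bianchi identity $\nabla_{[p} R_{qr]st} = 0$, together with the contracted Bianchi identity, which in the conformally flat setting translates into the statement that the Schouten tensor is a \emph{Codazzi tensor}: $\nabla_i A_{jl} = \nabla_j A_{il}$. This is the analytic heart of the matter, and it is exactly the place where local conformal flatness (equivalently $W\equiv 0$) is used — without it one picks up a $\mathrm{div}\,W$ term.

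The concrete steps I would carry out are as follows. First, recall from Proposition \ref{property} that $P_{(k)}$ is divergence-free, $\nabla_i P_{(k)}^{ijlm}=0$; this is proved from the symmetries of $Riem$ and the Bianchi identity and holds on any manifold. Second, substitute the conformally flat expression \eqref{Rm} for each curvature factor inside the definition \eqref{Pk} of $P_{(k)}^{st\,lm}$. Exactly as in the computation just preceding this proposition (the one proving \eqref{relation}), the antisymmetrization provided by the Kronecker delta collapses the four terms of \eqref{Rm} into a single term, and contracting the "$\delta$ half" of each pair against the delta tensor produces combinatorial factors $(n-k)\cdots$; the net effect is that $P_{(k)}^{st\,lm}$ becomes, up to an explicit positive dimensional constant $c_{n,k}$, precisely the tensor $(T_{k-1})^{[s}_{\ \cdot}(A)$ contracted with $\delta^{l}_{\cdot}g^{t]m}$ — in other words $P_{(k)}$ in the conformally flat case is built out of $T_{k-1}(A)$ in the same way that $P_{(2)}$ in \eqref{P} is. Third, impose $\nabla_i P_{(k)}^{ijlm}=0$ on this rewritten expression. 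The covariant derivative hits either a factor of $A$ or (after lowering) the metric, which is parallel; using the Codazzi property $\nabla_{[i}A_{j]l}=0$ one checks that all the "cross" terms cancel in the antisymmetrized sum, leaving a single surviving contraction which is a nonzero multiple of $\nabla_i (T_{k-1})^i_{j}(A)$. Dividing by the constant gives the claim.

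The main obstacle — and the step I would budget the most care for — is the bookkeeping in the second and third steps: tracking which index pairs of the generalized Kronecker delta get contracted with $\delta$'s versus with $A$'s, and verifying that after the antisymmetrization the only term that does \emph{not} die by the Codazzi identity is the one proportional to $\mathrm{div}\,T_{k-1}$. It is essentially the same combinatorial identity $T_k(B) = \sigma_k(B) I - \sigma_{k-1}(B)B + \cdots$ from \eqref{Tk} that does the work, rewritten covariantly; the cleanest route is probably to differentiate \eqref{Tk} term by term and use that $\nabla B$ is totally symmetric (Codazzi) to telescope the sum, rather than to push the derivative through \eqref{Pk} index-by-index. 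Alternatively, since the statement is attributed to \cite{Via1}, one may simply invoke that reference; but the derivation from Proposition \ref{property} via \eqref{Rm} is self-contained and fits the paper's notation, so that is the route I would present.
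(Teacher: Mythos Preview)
The paper does not supply a proof of this proposition; it is stated with the attribution ``(see \cite{Via1})'' and no argument follows. So there is no paper proof to compare against --- your closing remark that one may simply invoke \cite{Via1} is exactly what the paper does.

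That said, your proposed argument is sound in substance. The cleanest route is actually the one you mention second: differentiate formula \eqref{Tk} for $(T_{k-1})^i_j(A)$ directly. In the locally conformally flat case the Schouten tensor is Codazzi, $\nabla_i A_{jl}=\nabla_j A_{il}$ (vanishing of the Cotton tensor, equivalent to $W\equiv 0$ for $n\ge 4$). When $\nabla_i$ hits the factor $A_{i_p}^{\,j_p}$ in \eqref{Tk}, Codazzi lets you swap $i\leftrightarrow i_p$ in the derivative, and total antisymmetry of the generalized Kronecker delta in its upper indices then makes the term equal to its own negative. No telescoping occurs --- each Leibniz term vanishes separately.

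Your primary route through $P_{(k)}$ also works, but the logic should be streamlined relative to your description. Once $P_{(k)}^{stlm}$ is rewritten via \eqref{Rm} as (a dimensional constant times) $\delta^{i_1\cdots i_{k-1}st}_{j_1\cdots j_{k-1}pq}\,A_{i_1}^{j_1}\cdots A_{i_{k-1}}^{j_{k-1}}\,g^{pl}g^{qm}$, the trace $g_{tm}P_{(k)}^{stlm}$ is a nonzero constant times $(T_{k-1})^{sl}$. Divergence-freeness then follows immediately from Proposition~\ref{property} and parallelism of $g$; there is no separate ``cross-term'' analysis to carry out and no ``single surviving contraction'' step --- the Codazzi/Bianchi input is already contained in Proposition~\ref{property}.
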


Without the conformal flatness Proposition \ref{pro2} still  holds  for $k=2$, i.e., $T_1$ is divergence-free,  which was proved in \cite{Via1}.

\section{Positive Mass Theorem for  CF manifolds and Rigidity}

In this section we prove Theorem \ref{mainthm1} and  Theorem \ref{mainthm2}.
For the proof  we need one more well-known property.
\begin{prop}
\label{prop1.1}
Let $u:\R^n\to\R$ be some smooth function. Denote $D^2 u=(u_{ij})$ be the hessian matrix of $u$ with respect to Euclidean metric. Then $T_k(D^2 u)$ is divergence-free, that is,
\[
\partial_i T_k^{ij}(D^2 u)=\partial_j T_k^{ij}(D^2 u)=0.
\]
\end{prop}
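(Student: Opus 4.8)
The plan is to prove that the Newton tensor $T_k(D^2u)$ of the Euclidean Hessian is divergence-free by a purely combinatorial argument using the explicit Kronecker-delta formula \eqref{Tk}. Write $u_{ij}=\partial_i\partial_j u$, and use
\[
T_k^{ij}(D^2u)=\frac{1}{k!}\,\d^{i i_1\cdots i_k}_{j j_1\cdots j_k}\,u_{i_1 j_1}\cdots u_{i_k j_k},
\]
where I freely identify the matrix $D^2u$ with $(u^i_j)=(u_{ij})$ since we are working with the Euclidean metric (so upper and lower indices are interchangeable). Because the generalized Kronecker delta is totally antisymmetric in its upper indices $i,i_1,\dots,i_k$ and (separately) in its lower indices $j,j_1,\dots,j_k$, the tensor $T_k^{ij}$ is automatically symmetric in $i$ and $j$ (one checks this by swapping the roles of the two index blocks together with a relabelling of the summation indices, using the symmetry $u_{i_a j_a}=u_{j_a i_a}$). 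Hence it suffices to show $\partial_i T_k^{ij}=0$; the statement $\partial_j T_k^{ij}=0$ then follows.

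The key step is the differentiation. Applying $\partial_i$ and using the Leibniz rule,
\[
\partial_i T_k^{ij}(D^2u)=\frac{1}{k!}\,\d^{i i_1\cdots i_k}_{j j_1\cdots j_k}\sum_{a=1}^{k} u_{i_1 j_1}\cdots \bigl(\partial_i u_{i_a j_a}\bigr)\cdots u_{i_k j_k}
=\frac{1}{(k-1)!}\,\d^{i i_1\cdots i_k}_{j j_1\cdots j_k}\,\bigl(\partial_i u_{i_1 j_1}\bigr)u_{i_2 j_2}\cdots u_{i_k j_k},
\]
where the last equality uses that all $k$ terms in the sum are equal after relabelling the dummy indices. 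Now $\partial_i u_{i_1 j_1}=\partial_i\partial_{i_1}\partial_{j_1}u$ is symmetric in the pair $(i,i_1)$. But this pair is contracted against $\d^{i i_1\cdots i_k}_{j j_1\cdots j_k}$, which is antisymmetric under exchanging the upper indices $i\leftrightarrow i_1$. The contraction of a symmetric object with an antisymmetric one vanishes, so $\partial_i T_k^{ij}(D^2u)=0$.

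I do not anticipate a serious obstacle here; the argument is the standard ``symmetric times antisymmetric equals zero'' trick, and the only points requiring a little care are (i) justifying the passage to the single representative term after Leibniz, which is routine index-relabelling, and (ii) making sure the raising/lowering of indices via $\delta$ causes no trouble — it does not, since we are on flat $\R^n$ and third partials commute. One small subtlety worth a remark: unlike Propositions \ref{pro2} and the divergence-free property of $P_{(k)}$, here we genuinely use that the ``curvature'' input is a full Hessian (so $\partial u_{ij}$ is totally symmetric in all three indices), which is exactly what makes the antisymmetrization kill the divergence.
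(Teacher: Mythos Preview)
Your argument is correct and is precisely the standard combinatorial proof: the Leibniz rule reduces $\partial_i T_k^{ij}$ to a contraction of the totally symmetric third derivative $\partial_i\partial_{i_1}\partial_{j_1}u$ against an expression antisymmetric in $(i,i_1)$, forcing it to vanish. The paper itself does not supply a proof of this proposition---it is stated as a well-known property and used without argument---so there is nothing to compare; your write-up fills in exactly the details one would expect.
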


\begin{rema}
Note that in Proposition \ref{prop1.1} the divergence-free is with respect to the standard euclidean metric $\delta$
and  in Proposition \ref{pro2} the divergence-free is with respect to the metric $g=e^{-2u}\delta$.
\end{rema}

For an asymptotically flat CF manifold, we first have an equivalent form of Gauss-Bonnet-Chen mass defined by (\ref{GBC}). By (\ref{decompose}), (\ref{Rm}) together with Proposition \ref{property}, we have
$$
L_k=4P^{ijlm}_{(k)}A_{il}g_{jm}=-4P^{ijjl}_{(k)}A_{il}e^{-2u}.$$
On the other hand,  from (\ref{sigmak}) and (\ref{relation}) we have
$$L_k=2^k (k-1)! \frac{(n-k)!}{(n-2k)!}(T_{k-1}(A))^{il} A_{il}.$$
 For the  Gauss-Bonnet-Chern mass (\ref{GBC}) we have
\begin{eqnarray*}
m_k&:=&\frac{(n-2k)!} {2^{k-1}(n-1)!\;\omega_{n-1}}
\lim_{r\to\infty}\int_{S_r}P_{(k)}^{ijlm}\partial_m g_{jl} \nu_{i}dS\\
&=&\frac{(n-2k)!} {2^{k-1}(n-1)!\;\omega_{n-1}}
\lim_{r\to\infty}\int_{S_r}-2e^{-2u}P_{(k)}^{ijjl} u_l \nu_{i}dS.
\end{eqnarray*}
Combining all together, we thus obtain the following equivalent form of (\ref{GBC}),
\begin{equation}\label{equv.form}
m_k=\lim_{r\to\infty}\frac{(k-1)! (n-k)!}{(n-1)!\; \omega_{n-1}}\int_{S_r}(T_{k-1}(A))^{ij}u_j\nu_i d S.
\end{equation}
This formula would be useful in the computation of the Gauss-Bonnet-Chern mass.
Now we start to prove Theorem \ref{mainthm1}.

\

{\noindent{\it Proof of Theorem \ref{mainthm1}.}
 Since $g=e^{-2u}\delta,$ a direct computation gives 
\begin{eqnarray*}
Ric&=&(n-2)(D^2 u+\frac{1}{n-2}(\Delta u)\delta+du\otimes du-|\nabla u|^2),\\
R&=&e^{2u}(2(n-1)\Delta u-(n-1)(n-2)|\nabla u|^2),
\end{eqnarray*}
which imply
\begin{equation}\label{schouten}
A_g:=\frac{1}{n-2}\left( Ric-\frac{Rg}{2(n-1)}\right)=D^2u-\frac{|\nabla u|^2}{2}I+du\otimes du.
\end{equation}
Here $\nabla$ and $\Delta$ are operators with respect to the Euclidean metric $\delta$  and $D^2$ are the Hessian operator.
Since
\[
T_{k-1}(D^2 u)=T_{k-1}(A)+O(|x|^{-k\tau-2k+2}),
\] which  follows from (\ref{decay}) and (\ref{Tk}), we have by (\ref{equv.form})
\begin{eqnarray}\label{eq_a}
m_k&=&\lim_{r\to\infty}\frac{(k-1)! (n-k)!}{(n-1)!\; \omega_{n-1}}\int_{S_r}(T_{k-1}(D^2 u))^{ij}u_j\nu_i d S.
\end{eqnarray}
 Applying Proposition \ref{prop1.1} and Green's formula, we obtain
\begin{eqnarray}
\label{Gr}
\int_{S_r}(T_{k-1}(D^2 u))^{ij}u_j\nu_i d S=\int_{B_r}(T_{k-1}(D^2 u))^{ij}u_{ij} dx=k\int_{B_r}\s_k(D^2 u)dx.
\end{eqnarray}
Now, we write
\[
D^2u=A+\frac{|\nabla u|^2}{2}I-du\otimes du.
\]
It is crucial to see  that the matrix $\frac{|\nabla u|^2}{2}I-du\otimes du$ has one eigenvalue $-\frac{|\nabla u|^2}{2}$ and $n-1$ eigenvalues $\frac{|\nabla u|^2}{2}$. Therefore, $B:=\frac{|\nabla u|^2}{2}I-du\otimes du\in  \Gamma^+_k$ for $k<n/2$, for
\[ \sigma_j(B)=\frac{(n-1)!(n-2j)}{2^jj!(n-j)!} |\nabla u|^{2j} \quad \hbox {for any } j\le k<n/2.\]
It follows from (\ref{eq_a1}) that
\begin{eqnarray}
\label{expan}
\s_k(D^2 u)&=&\s_k(A+B)\\
&\ge&\ds\s_k(A)+\s_k(B)=\s_k(A)+\frac{(n-1)!(n-2k)}{2^kk!(n-k)!} |\nabla u|^{2k}.
\end{eqnarray}
Finally, we infer
\begin{eqnarray}
\label{massexpan}
m_k
\ge &\ds&\frac{(n-2k)!}{2^k(n-1)!\omega_{n-1}} \int_{\cM} e^{(n-2k)u}L_k(g)dvol_g\nonumber\\
&&\ds+ \frac{n-2k}{2^k}\int_{\cM} e^{(n-2k)u}|\nabla u|_g^{2k}dvol_g.  
\end{eqnarray}
This yields the positivity of the mass $m_k$. Moreover, if $m_k=0$, we have $\nabla u\equiv 0$. Hence $u\equiv 0$, that is, $g$ is the Euclidean metric.
 We finish the proof of the Theorem.
\qed

\begin{rema}
In the above proof, the calculations before (\ref{massexpan}) are with respect to the Euclidean metric $\delta$, namely $\sigma_k(A)$ means $\sigma_k(\delta^{-1}A)$. Hence from (\ref{relation}) that $L_k=2^k k! \frac{(n-k)!}{(n-2k)!} e^{2ku}\s_k(A),$
which has be used in (\ref{massexpan}).
\end{rema}

\

{\noindent{\it Proof of Theorem \ref{mainthm2}.} Let $v:=e^u$. Thus, the conformal metric is  written as $g=v^{-2}\delta$. For such a representation of the metric, the Schouten tensor (\ref{schouten}) can be written as
 \[
A=\frac{D^2v}{v}-\frac{|\nabla v|^2\delta}{2v^2}.
\]
Let $\alpha\in\mathbb {R}$ be some sufficiently negative number to be fixed later.
As in the proof of Theorem \ref{mainthm1}, it follows from the decay condition (\ref{decay})
 of $u$ that
 \[
(T_{k-1}(D^2 u))^{ij}u_j\nu_i =v^{\alpha}(T_{k-1}(D^2 v))^{ij}v_j\nu_i+O(|x|^{-(k+1)\tau-2k+1}) ,
\]
which  implies  from  (\ref{Tk}) and  (\ref{equv.form})
\begin{eqnarray}\label{eq_b}
m_k&=&\lim_{r\to\infty}\frac{(k-1)! (n-k)!}{(n-1)!\; \omega_{n-1}}\int_{S_r}v^{\alpha}(T_{k-1}(D^2 v))^{ij}v_j\nu_i d S.
\end{eqnarray}
Thus, a direct calculation leads to
\begin{eqnarray*}
m_k&=&\ds \lim_{r\to\infty}\frac{ (k-1)! (n-k)!}{(n-1)!\; \omega_{n-1}}\int_{S_r}v^{\alpha}(T_{k-1}(D^2 v))^{ij}v_j\nu_i d S\\
&=&\ds \frac{ (k-1)! (n-k)!}{(n-1)!\; \omega_{n-1}}\int_{\mathbb{R}^n} v^{\alpha}(T_{k-1}(D^2 v))^{ij}v_{ji} d x\\
&&+\ds \frac{ (k-1)! (n-k)!}{(n-1)!\; \omega_{n-1}}\int_{\mathbb{R}^n} v^{\alpha}{(T_{k-1}(D^2 v))^{ij}}_{,i}v_{j} d x\\
&&+\ds \frac{ (k-1)! (n-k)!\alpha }{(n-1)!\; \omega_{n-1}}\int_{\mathbb{R}^n} v^{\alpha-1}(T_{k-1}(D^2 v))^{ij}v_{i}v_j d x.
\end{eqnarray*}
On the other hand, it follows from Proposition (\ref{prop1.1}) that ${(T_{k-1}(D^2 v))^{ij}}_{,i}=0$ and also $$(T_{k-1}(D^2 v))^{ij}v_{ji}=k\s_k(D^2 v).$$ Therefore, we have
\begin{eqnarray*}
m_k&=&\ds \frac{ k! (n-k)!}{(n-1)!\; \omega_{n-1}}\int_{\mathbb{R}^n} v^{\alpha}\s_k(D^2 v)d x\\
&&+\ds \frac{(k-1)! (n-k)!\alpha }{(n-1)!\; \omega_{n-1}}\int_{\mathbb{R}^n} v^{\alpha-1}(T_{k-1}(D^2 v))^{ij}v_{i}v_j d x.
\end{eqnarray*}
We will try to  write the integral of the right hand   in terms of $\s_i(D^2 v)$ and $|\nabla v|^{2i}$, then in terms of $\s_i(A)$ and $|\nabla v|^{2i}$ for $0\le i\le k$.

Directly from the definition of the Newton tensor, we
know
\[
T_{i}(D^2 v)=\s_i(D^2 v)I-T_{i-1}(D^2 v)D^2v=\s_i(D^2 v)I-D^2 vT_{i-1}(D^2 v).
\]
It follows, together with the partial integration
\[\begin{array}{rl}
&\ds\vs \int_{\mathbb{R}^n} v^{\alpha-1}(T_{k-1}(D^2 v))^{ij}v_{i}v_j d x\\
=&\ds\vs\int_{\mathbb{R}^n} v^{\alpha-1}\s_{k-1}(D^2 v)|\nabla v|^2- \int_{\mathbb{R}^n} v^{\alpha-1}(T_{k-2}(D^2 v))^{il}v_{jl}v_jv_i\\
=&\ds\vs\int_{\mathbb{R}^n} v^{\alpha-1}\s_{k-1}(D^2 v)|\nabla v|^2-\frac12 \int_{\mathbb{R}^n} v^{\alpha-1}(T_{k-2}(D^2 v))^{ij}(|\nabla v|^2)_jv_i\\
=&\ds\vs\int_{\mathbb{R}^n} v^{\alpha-1}\s_{k-1}(D^2 v)|\nabla v|^2+\frac{\alpha-1}2 \int_{\mathbb{R}^n} v^{\alpha-2}(T_{k-2}(D^2 v))^{ij}|\nabla v|^2v_iv_j\\
&\ds\vs+\frac{1}2 \int_{\mathbb{R}^n} v^{\alpha-1}{(T_{k-2}(D^2 v))^{ij}}_{,j}|\nabla v|^2v_i+\frac{1}2 \int_{\mathbb{R}^n} v^{\alpha-1}{(T_{k-2}(D^2 v))^{ij}}|\nabla v|^2v_{ij}\\
=&\ds\frac{k+1}{2}\int_{\mathbb{R}^n} v^{\alpha-1}\s_{k-1}(D^2 v)|\nabla v|^2+\frac{\alpha-1}2 \int_{\mathbb{R}^n} v^{\alpha-2}(T_{k-2}(D^2 v))^{ij}|\nabla v|^2v_iv_j.
\end{array}\]
More generally, we have the following claim.\\

{\bf Claim.} For all $1\le l\le k-2$, we have
\beq
\label{relation1}
\begin{array}{llll}
&\ds \vs\int_{\mathbb{R}^n} v^{\alpha-1-l}(T_{k-1-l}(D^2 v))^{ij}|\nabla v|^{2l}v_iv_j\\
 =&\ds\vs\frac{k+l+1}{2(l+1)}\int_{\mathbb{R}^n} v^{\alpha-1-l}\s_{k-1-l}(D^2 v)|\nabla v|^{2(l+1)}\\
 &\ds+\frac{\alpha-l-1}{2(l+1)}\int_{\mathbb{R}^n} v^{\alpha-2-l}(T_{k-2-l}(D^2 v))^{ij}|\nabla v|^{2(l+1)}v_iv_j.
\end{array}
\eeq
As above we have
\beq
\label{eqexp}
\begin{array}{lll}
\ds \vs \int_{\mathbb{R}^n} v^{\alpha-1-l}(T_{k-1-l}(D^2 v))^{ij}|\nabla v|^{2l} v_{i}v_j d x\\
=\ds\vs \int_{\mathbb{R}^n} v^{\alpha-1-l}\s_{k-1-l}(D^2 v)|\nabla v|^{2(l+1)}- \int_{\mathbb{R}^n} v^{\alpha-1-l}(T_{k-2-l}(D^2 v))^{ij}|\nabla v|^{2l}v_{im}v_mv_i\\
=\ds\int_{\mathbb{R}^n} v^{\alpha-1-l}\s_{k-1-l}(D^2 v)|\nabla v|^{2(l+1)}-\frac12 \int_{\mathbb{R}^n} v^{\alpha-1-l}|\nabla v|^{2l}(T_{k-2-l}(D^2 v))^{ij}(|\nabla v|^2)_jv_i.\\
\end{array}
\eeq
On the other hand, we have
\[\begin{array}{rl}
&\ds\vs  -\frac12 \int_{\mathbb{R}^n} v^{\alpha-1-l}|\nabla v|^{2l}(T_{k-2-l}(D^2 v))^{ij}(|\nabla v|^2)_jv_i\\
=&\ds\vs \frac{\alpha-1-l}2 \int_{\mathbb{R}^n} v^{\alpha-2-l}(T_{k-2-l}(D^2 v))^{ij}|\nabla v|^{2(l+1)}v_iv_j\\
&\ds\vs +\frac{k-1-l}2 \int_{\mathbb{R}^n} v^{\alpha-1-l}\s_{k-1-l}(D^2 v)|\nabla v|^{2(l+1)}\\
&\ds
+\frac{l}2 \int_{\mathbb{R}^n} v^{\alpha-1-l}{(T_{k-2-l}(D^2 v))^{ij}}|\nabla v|^{2l}(|\nabla v|^2)_jv_i,
\end{array}\]
which implies
\[\begin{array}{rl}
&\ds\vs  -\frac12 \int_{\mathbb{R}^n} v^{\alpha-1-l}|\nabla v|^{2l}(T_{k-2-l}(D^2 v))^{ij}(|\nabla v|^2)_jv_i\\
=&\ds\vs\frac{\alpha-1-l}{2(l+1)} \int_{\mathbb{R}^n} v^{\alpha-2-l}(T_{k-2-l}(D^2 v))^{ij}|\nabla v|^{2(l+1)}v_iv_j\\
&\ds+\frac{k-1-l} {2(l+1)}\int_{\mathbb{R}^n} v^{\alpha-1-l}\s_{k-1-l}(D^2 v)|\nabla v|^{2(l+1)}.
\end{array}\]
Going back to (\ref{eqexp}), the desired claim yields.
Hence, we have
\[ \begin{array}{rl}
&\ds\vs \int_{\mathbb{R}^n} v^{\alpha-1}(T_{k-1}(D^2 v))^{ij}v_{i}v_j d x\\
=&\ds\vs\frac{k+1}{2}\int_{\mathbb{R}^n} v^{\alpha-1}\s_{k-1}(D^2 v)|\nabla v|^2+\frac{(\alpha-1)\cdots(\alpha-k+1)}{2^{k-1}(k-1)!}\int_{\mathbb{R}^n} v^{\alpha-k}|\nabla v|^{2k}\\
&+\ds\sum_{l=2}^{k-1}\frac{(\alpha-1)\cdots(\alpha-l+1)(k+l)}{2^{l}l!}\int_{\mathbb{R}^n} v^{\alpha-l}|\nabla v|^{2l}\s_{k-l}(D^2 v).
\end{array}\]
Finally, we infer
\beq
\label{massexp3}
\begin{array}{llllll}
&\ds \frac{(n-1)!\; \omega_{n-1}} { (k-1)! (n-k)!}m_k\\
=&\ds k\int_{\mathbb{R}^n} v^{\alpha}\s_k(D^2 v)d x+\frac{(k+1)\alpha}{2}\int_{\mathbb{R}^n} v^{\alpha-1}\s_{k-1}(D^2 v)|\nabla v|^2\\
&\ds+\frac{\alpha (\alpha-1)\cdots(\alpha-k+1)}{2^{k-1}(k-1)!}\int_{\mathbb{R}^n} v^{\alpha-k}|\nabla v|^{2k}\\
&+\ds\sum_{l=2}^{k-1}\frac{\alpha(\alpha-1)\cdots(\alpha-l+1)(k+l)}{2^{l}l!}\int_{\mathbb{R}^n} v^{\alpha-l}|\nabla v|^{2l}\s_{k-l}(D^2 v).
\end{array}
\eeq
Now we want to write $m_k$ in terms of $\s_l(A)$ and $|\nabla v|^{2l}$. Recall
\[
D^2v=vA+\frac{|\nabla v|^2I}{2v},
\]
so that for all $1\le l \le k$ we have
\[
\s_l(D^2v)=v^l\s_l(A+\frac{|\nabla v|^2I}{2v^2})=v^l\sum_{j=0}^l C_{n-j}^{l-j}\s_j(A)\left(\frac{|\nabla v|^2}{2v^2}\right)^{l-j},
\]
where $C_{n-j}^{k-j}=\frac{(n-j)!}{(n-k)!(k-j)!}$. From (\ref{massexp3}), we deduce
\[
\begin{array}{rl}
&\ds \frac{(n-1)!\; \omega_{n-1}} { (k-1)! (n-k)!}m_k\\
=&\ds k\int_{\mathbb{R}^n} v^{\alpha+k}\sum_{j=0}^kC_{n-j}^{k-j}\s_j(A)\left(\frac{|\nabla v|^2}{2v^2}\right)^{k-j}\\
&\ds +(k+1)\alpha\int_{\mathbb{R}^n} v^{\alpha+k}\sum_{j=0}^{k-1}C_{n-j}^{k-1-j}\s_j(A)\left(\frac{|\nabla v|^2}{2v^2}\right)^{k-j}\\
&\ds+\frac{2\alpha (\alpha-1)\cdots(\alpha-k+1)}{(k-1)!}\int_{\mathbb{R}^n} v^{\alpha+k}\left(\frac{|\nabla v|^2}{2v^2}\right)^{k}\\
&+\ds\sum_{l=2}^{k-1}\sum_{j=0}^{k-l}\frac{\alpha(\alpha-1)\cdots(\alpha-l+1)(k+l)}{l!}\int_{\mathbb{R}^n} v^{\alpha+k}C_{n-j}^{k-l-j}\s_j(A)\left(\frac{|\nabla v|^2}{2v^2}\right)^{k-j}\\
=&\ds\int_{\mathbb{R}^n} v^{\alpha+k}\sum_{j=0}^kP_{k-j}(\alpha) \s_j(A)\left(\frac{|\nabla v|^2}{2v^2}\right)^{k-j}.
\end{array}\]
Here for all $0\le j\le k$, $P_j(\alpha)$ is a polynomial of degree $j$ in $\alpha$ with a leading coefficient equal to $k$ when $j=0$, to $k+1$ when $j=1$, to $\frac{2k-j}{(k-j)!}$ when $2\le j\le k-1$ and  to $\frac{2}{(k-1)!}$ when $j=k$. Therefore, we can choose sufficiently negative number $\alpha<0$ such that $(-1)^j P_j(\alpha)>0$ for all $0\le j\le k$. By the assumptions $(-1)^jL_j\ge 0$ for all $1\le j\le k$, which are equivalent to  $(-1)^j\s_j(A)\ge 0$, we have
\[P_{k-j}(\alpha) \s_j(A)=(-1)^{k-j}P_{k-j}(-1)^{j}\s_j(A)\ge 0,\]
i.e.,
 each term on the right hand side in the last inequality is non-negative. This gives $m_k\ge 0$.
 Here we need that $k$ is even.
 Moreover,  if $m_k=0$, we have $\nabla v\equiv 0$, and hence
  $v$ is a constant $1$
 and $\cM$ is the standard euclidean space. We finish the proof.
\qed

\section{Penrose type inequality}
\noindent Let $(\cM^n,g)=(\R^n\setminus \Omega, e^{-2u}\delta)$ be now a  CF manifold,
 where $\Omega$ is a bounded domain such that each connected component of $\Omega$  is  star-shaped such that the second fundamental form  of the boundary $\p \Omega$ is in the cone
 $\Gamma_{k-1}^+(\p\Omega)$.
As before, we assume $2k<n$, $g\in \Gamma_k$, $L_k$ integrable and $u$ satisfies the decay condition at the infinity
 \[
|u|+|x||\nabla u|+|x|^2|\nabla^2 u|=O(|x|^{-\tau}),
\]
with $\tau>\frac{n-2k}{k+1}$. First, we assume $\Omega$ has just one connected component.

\begin{theo}\label{thm}
 Let $(\cM,g)=(\R^n\setminus \Omega, e^{-2u}\delta)$
satisfy the above assumptions.
 Assume, in addition, that $\partial \cM$ is a horizon on $(\cM,g)$ (i.e. $\partial \cM=\partial \Omega \subset \cM$ is minimal)  and $u$  is  constant on $\partial\Omega$.
  Then we have the  following Penrose type inequality
\beq
\label{penrose}
\begin{array}{llll}
m_k
&\ge \ds & \ds \vs \frac{(n-2k)!}{2^k(n-1)!\;\omega_{n-1}} \int_{\cM} e^{(n-2k)u}L_k(g)dvol_g\\
&&\ds\vs + \frac{n-2k}{2^k}\int_{\cM} e^{(n-2k)u}|\nabla u|_g^{2k}dvol_g+\left(\frac{|\p\Omega|}{\omega_{n-1}}\right)^{\frac{n-2k}{n-1}}\\
&\ge&\ds \left(\frac{|\p\Omega|}{\omega_{n-1}}\right)^{\frac{n-2k}{n-1}}.
\end{array}
\eeq
Moreover, if we assume the second fundamental form of $\p\Omega$ is in the cone $\Gamma_{2k-1}$ $ (k\geq 2)$, we have
\begin{equation}
\label{penrose1}
\begin{array}{lll}
m_k &\ge \ds & \ds\vs \frac{(n-2k)!}{2^k(n-1)!\; \omega_{n-1}} \int_M e^{(n-2k)u}L_k(g)dvol_g\\
&&\ds+ \frac{n-2k}{2^k}\int_{\cM} e^{(n-2k)u}|\nabla u|_g^{2k}dvol_g+  \left(\frac{\int_{\p\Omega}R}{(n-1)(n-2)\omega_{n-1}}\right)^{\frac{n-2k}{n-3}}\\
&\ge & \ds \left(\frac{\int_{\p\Omega}R}{(n-1)(n-2)\omega_{n-1}}\right)^{\frac{n-2k}{n-3}}.
\end{array}
\end{equation}
Here $R$ is the scalar curvature of $\p\Omega$ as a hypersurface in  $\R^n$.
\end{theo}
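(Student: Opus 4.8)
The plan is to adapt the proof of Theorem \ref{mainthm1} to the exterior domain $\cM = \R^n\setminus\Omega$, carefully tracking the boundary contribution over $\p\Omega$ that arises when one applies Green's formula on $B_r\setminus\Omega$ instead of $B_r$. Starting from the equivalent form \eqref{equv.form} of the mass and the fact that $T_{k-1}(D^2u) = T_{k-1}(A) + O(|x|^{-k\tau-2k+2})$, one has
\[
m_k=\lim_{r\to\infty}\frac{(k-1)!(n-k)!}{(n-1)!\,\omega_{n-1}}\int_{S_r}(T_{k-1}(D^2u))^{ij}u_j\nu_i\,dS.
\]
Now apply the divergence theorem (Proposition \ref{prop1.1}) on the annular region $B_r\setminus\Omega$: since $\p(B_r\setminus\Omega)=S_r\cup(-\p\Omega)$ and $\p_i T_{k-1}^{ij}=0$, one gets
\[
\int_{S_r}(T_{k-1}(D^2u))^{ij}u_j\nu_i\,dS = k\int_{B_r\setminus\Omega}\s_k(D^2u)\,dx + \int_{\p\Omega}(T_{k-1}(D^2u))^{ij}u_j\nu_i\,dS,
\]
where now $\nu$ is the outward normal to $\Omega$ (inward to $\cM$). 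The bulk term is handled exactly as in Theorem \ref{mainthm1}: writing $D^2u = A + B$ with $B=\frac{|\nabla u|^2}{2}I - du\otimes du \in \Gamma_k^+$ and using the superadditivity \eqref{eq_a1}, it contributes the first two (non-negative) terms on the right-hand side of \eqref{penrose}, namely $\frac{(n-2k)!}{2^k(n-1)!\omega_{n-1}}\int_\cM e^{(n-2k)u}L_k\,dvol_g + \frac{n-2k}{2^k}\int_\cM e^{(n-2k)u}|\nabla u|_g^{2k}\,dvol_g$. Here one uses crucially that $\Omega$ convex forces the integrand region to make sense and that $L_j\ge 0$ for $j\le k$ (equivalently $g\in\Gamma_k$) keeps the bulk sign-definite.

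The main work is the boundary term $\int_{\p\Omega}(T_{k-1}(D^2u))^{ij}u_j\nu_i\,dS$. Since $u$ is constant on $\p\Omega$, the tangential derivative of $u$ along $\p\Omega$ vanishes, so $\nabla u = (\p_\nu u)\nu$ on $\p\Omega$; hence $u_j = (\p_\nu u)\nu_j$ and the integrand becomes $(\p_\nu u)(T_{k-1}(D^2u))^{ij}\nu_i\nu_j$. Next I would use that $\p\cM$ is a horizon, i.e. $\p\Omega$ is minimal as a hypersurface in $(\cM,g)$: this translates, via the conformal change $g=e^{-2u}\delta$, into an equation relating $\p_\nu u$ to the Euclidean mean curvature $H$ of $\p\Omega$ (namely $H = (n-1)\p_\nu u$ up to sign, since the $g$-mean curvature is $e^u(H + (n-1)\p_\nu u)$ or similar). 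Combined with the constancy of $u$ on $\p\Omega$, the restriction of $D^2u$ to $\p\Omega$ can be re-expressed in terms of the second fundamental form $\mathrm{II}$ of $\p\Omega$ in $\R^n$: one gets $u_{\alpha\beta}|_{T\p\Omega} = (\p_\nu u)\, h_{\alpha\beta}$ plus the purely normal piece, where $h$ is the second fundamental form. The upshot is that $(T_{k-1}(D^2u))^{ij}\nu_i\nu_j$ reduces to $\s_{k-1}$ of the second fundamental form of $\p\Omega$ times a power of $\p_\nu u$, i.e. the boundary term becomes (a constant times) $\int_{\p\Omega}\s_{k-1}(h)\,(\p_\nu u)^k\,dS_\delta$. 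Using that $u$ is constant and that $\p\Omega$ is convex (its second fundamental form lies in $\Gamma_{k-1}^+$), together with the Minkowski-type / isoperimetric inequality $\int_{\p\Omega}\s_{k-1}(h)\,dS \ge c_{n,k}|\p\Omega|^{(n-1-k)/(n-1)}$ and the Gauss-Bonnet-type expression $\int_{\p\Omega}\s_1(h)\sim\int_{\p\Omega}R$ (when $h\in\Gamma_{2k-1}$), one bounds the boundary term below by $(|\p\Omega|/\omega_{n-1})^{(n-2k)/(n-1)}$, respectively by $(\int_{\p\Omega}R/((n-1)(n-2)\omega_{n-1}))^{(n-2k)/(n-3)}$. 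This last step — identifying the exact constant and invoking the right geometric inequality for $\int_{\p\Omega}\s_{k-1}(h)$ on convex hypersurfaces — is where I expect the real obstacle to lie, and it is presumably where the hypotheses ``$\Omega$ convex'', ``horizon'', and ``$u$ constant on $\p\Omega$'' are each used.

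To finish, I would combine the three pieces: the limit as $r\to\infty$ of the $S_r$ integral equals the (non-negative) bulk integral plus the boundary integral, and after multiplying by $\frac{(k-1)!(n-k)!}{(n-1)!\omega_{n-1}}$ the bulk gives the stated mass-integral terms and the boundary gives the horizon area (or scalar-curvature) term; dropping the non-negative bulk terms yields the clean inequalities \eqref{penrose_n} and \eqref{penrose1_n}. For the multi-component case one simply sums the boundary contributions over the connected components of $\Omega$, each star-shaped/convex, and uses subadditivity of $t\mapsto t^{(n-2k)/(n-1)}$ (concavity, since $n-2k<n-1$) to combine them — or, as the theorem is stated for one component, one restricts to that case. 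The rigidity statement ($u\equiv0$, equivalently $\Omega=\emptyset$) in the earlier theorems does not reappear here since the presence of a horizon already rules it out; the only subtlety is making sure the case $\int_{\p\Omega}R$ makes sense, which is guaranteed by $h\in\Gamma_{2k-1}\subset\Gamma_1$ so that $R\ge 0$ on $\p\Omega$.
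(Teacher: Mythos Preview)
Your outline follows the paper's proof essentially step for step: Green's formula on $B_r\setminus\Omega$, the bulk handled via the superadditivity argument of Theorem~\ref{mainthm1}, and the boundary term reduced (via constancy of $u$ on $\p\Omega$ and the horizon condition) to a quantity involving only the second fundamental form $h$ of $\p\Omega$ in $\R^n$. Your identification of the boundary integrand as $(\p_\nu u)^k\,\s_{k-1}(h)$ is exactly what the paper obtains.

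Two places where you are vague or slightly off, and which the paper makes precise:

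\emph{(i)} After writing the boundary integrand as $(\p_\nu u)^k\s_{k-1}(h)$ you invoke an Aleksandrov--Fenchel inequality directly on $\int_{\p\Omega}\s_{k-1}(h)$, but the factor $(\p_\nu u)^k$ is still there. The paper first substitutes the horizon relation $\p_\nu u = \s_1(h)/(n-1)$, so the integrand becomes $\frac{1}{(n-1)^k}\s_1(h)^k\s_{k-1}(h)$, a quantity depending only on $h$. It then uses the Newton--MacLaurin inequality (valid since $h\in\Gamma_{k-1}^+$) to bound this pointwise below by a constant times $\s_{k-1}(h)^{(2k-1)/(k-1)}$, applies H\"older to pass to $\big(\int\s_{k-1}\big)^{(2k-1)/(k-1)}|\p\Omega|^{-k/(k-1)}$, and finally the Aleksandrov--Fenchel inequality $\int_{\p\Omega}\s_{k-1}(h)\ge c_{n,k}\,\omega_{n-1}^{(k-1)/(n-1)}|\p\Omega|^{(n-k)/(n-1)}$ to reach the area term with the correct constant.

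\emph{(ii)} For \eqref{penrose1} your claim that $\int_{\p\Omega}R\sim\int_{\p\Omega}\s_1(h)$ is incorrect: by the Gauss equation for a hypersurface in $\R^n$, the intrinsic scalar curvature is $R=2\s_2(h)$. The paper instead bounds $\frac{1}{(n-1)^k}\s_1(h)^k\s_{k-1}(h)\ge c\,\s_{2k-1}(h)$ pointwise (Newton--MacLaurin, which is where the hypothesis $h\in\Gamma_{2k-1}$ is used), and then applies Aleksandrov--Fenchel in the form $\int_{\p\Omega}\s_{2k-1}(h)\ge c\big(\int_{\p\Omega}\s_2(h)\big)^{(n-2k)/(n-3)}$.
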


\begin{proof}

Applying Proposition \ref{prop1.1} and Green's formula, we obtain
\begin{eqnarray}
\int_{S_r}(T_{k-1}(D^2 u))^{ij}u_j\nu_i d S-\int_{\p\Omega}(T_{k-1}(D^2 u))^{ij}u_j\nu_i d S=k\int_{B_r\setminus \Omega}\s_k(D^2 u)dx,
\end{eqnarray}
for large $r>0$.
The argument given in the proof of Theorem \ref{mainthm1}, together with (\ref{expan})  to (\ref{eq_b}), implies
\beq
\label{massk}
\begin{array}{lllll}
m_k
&\ge &\ds \vs\vs \frac{(n-2k)!}{2^k(n-1)! \omega_{n-1}} \int_{\cM} e^{(n-2k)u}L_k(g)dvol_g\\
\vspace{2mm}
&&\ds+ \frac{n-2k}{2^k}\int_\cM e^{(n-2k)u}|\nabla u|_g^{2k}dvol_g\\
\vspace{2mm}
&&\ds+\frac{(k-1)!(n-2k)!}{(n-1)!\;\omega_{n-1}} \int_{\p \Omega}(T_{k-1}(D^2 u))^{ij}u_j\nu_i d S.
\end{array}
\eeq
Recall $\nu$ is the normal vector pointing to the infinity. Since  $\partial \cM$ is a horizon of $\cM$, the mean curvature of $\partial \cM$ is equal to zero at the boundary. We denote $H$ the mean curvature of
$\partial \Omega$ in $\R^n$. As $g$ is a conformal metric, the mean curvature of $ \partial \cM$ is equal to $e^u(H-(n-1) \langle \nabla u,\nu\rangle)$. Therefore, on the boundary $\partial \Omega$ we have
\beq
\label{eq3.1}
H-(n-1) \langle \nabla u,\nu\rangle=0.
\eeq
In particular, $\langle \nabla u,\nu\rangle>0$ on the boundary, since we assume the second fundamental form $L$ is in the cone $\Gamma_{k-1}^+(\p\Omega)$.
 On the other hand, from the non-negativity of the scalar curvature, we  have
 \[
\Delta u\ge 0.
\]
Hence, by the Maximum principle, we deduce $u\le 0$ in $\Omega$.  For all $x\in\p\Omega$, we split $T_x\R^n=T_x\p\Omega\oplus \R\nu$ as the sum of tangential part
 and normal part. Let  $e_\beta$  ($1\le \beta\le n-1$)  a
basis of $\partial\Omega$ and $e_n=\nu$.  And Let $B=(D^2u(e_i,e_j))_{1\le i,j\le n}$ be the  Hessian matrix and $B'=(D^2u(e_\alpha,e_\beta))_{1\le \alpha,\beta\le n-1}$ the first $(n-1)\times (n-1)$ block in $B$. Recall that  $u$ is a constant on the boundary $\partial\Omega$. We have
for all $1\le \alpha,\beta\le n-1$
\beq
\label{eq3.2}
D^2 u(e_\alpha,e_\beta)=\langle \nabla u,\nu\rangle L(e_\alpha,e_\beta),
\eeq
where $L$ is the second fundamental form with respect to the normal vector $-\nu$.  Hence, we can compute
\beq
\label{eq3.3}
(T_{k-1}(D^2 u))^{ij}u_j\nu_i=\langle \nabla u,\nu\rangle\frac{\p\s_k(B)}{\p b_{nn}}=\langle \nabla u,\nu\rangle \s_{k-1}(B').
\eeq
Here we have used  the fact $\nabla_\beta u=0$ on the boundary. Gathering (\ref{eq3.1}) to (\ref{eq3.3}), we deduce
\beq
\label{eq3.3bis}
(T_{k-1}(D^2 u))^{ij}u_j\nu_i=\langle \nabla u,\nu\rangle^k \s_{k-1}(L)=\frac{1}{(n-1)^k}\s_1(L)^k\s_{k-1}(L).
\eeq
Recall that in the Garding cone $\Gamma_m^+$, we have the  Newton-MacLaurin inequalities,
\begin{eqnarray}\label{N-M1}
\frac{\sigma_{m-1}\sigma_{m+1}}{\sigma_m^2}&\leq&\frac{m(n-m-1)}{(m+1)(n-m)},\\
\frac{\sigma_1\sigma_{m-1}}{\sigma_m}&\geq&\frac{m(n-1)}{n-m}.\label{N-M2}
\end{eqnarray}
We have $$T_{k-1}(D^2 u))^{ij}u_j\nu_i\geq\left( \frac{(k-1)!}{(n-1)\cdots(n-k+1)}\right)^{\frac{k}{k-1}}\s_{k-1}(L)^{\frac{2k-1}{k-1}}.$$

 From the H\"older inequality and the Aleksandrov-Fenchel inequality (see \cite{Sch}, \cite{GL} and \cite{CW} for example), we have
\begin{eqnarray*}
 \int_{\partial\Omega}(T_{k-1}(D^2 u))^{ij}u_j\nu_i dS &\ge&\left( \frac{(k-1)!}{(n-1)\cdots(n-k+1)}\right)^{\frac{k}{k-1}}\int_{\partial\Omega}\s_{k-1}(L)^{\frac{2k-1}{k-1}}\\
 &\ge&\left( \frac{(k-1)!}{(n-1)\cdots(n-k+1)}\right)^{\frac{k}{k-1}}\left(\int_{\partial\Omega}\s_{k-1}(L)\right)^{\frac{2k-1}{k-1}} |\partial \Omega|^{\frac{-k}{k-1}}\\
&\ge&  \frac{(n-1)!}{(k-1)!(n-k)!} \omega_{n-1}^{\frac{2k-1}{n-1}}|\partial \Omega|^{\frac{n-2k}{n-1}}.
\end{eqnarray*}
Going back to (\ref{massk}), we get the desired inequality (\ref{penrose}).  Now, assume $L\in \Gamma_{2k-1}$, it follows from the Newton-MacLaurin inequality that
\begin{eqnarray*}
\frac{1}{(n-1)^k}\s_1(L)^k\s_{k-1}(L)\ge \frac{(2k-1)!(n-2k)!}{(k-1)!(n-k)!}\s_{2k-1}(L).
\end{eqnarray*}
Hence, again by the Aleksandrov-Fenchel inequality, we get
\begin{eqnarray*}
 \int_{\partial\Omega}(T_{k-1}(D^2 u))^{ij}u_j\nu_idS &\ge& \frac{(2k-1)!(n-2k)!}{(k-1)!(n-k)!} \int_{\partial\Omega}\s_{2k-1}(L)\\
&\ge&  \frac{(n-1)!}{(k-1)!(n-k)!}\omega_{n-1}^\frac{2k-3}{n-3}\left(\int_{\p \Omega} \frac{2\s_2(L)}{(n-1)(n-2)}   \right)^{\frac{n-2k}{n-3}}.
\end{eqnarray*}
In view of (\ref{massk}), we prove inequality  (\ref{penrose1})
and finish the proof. \end{proof}

\begin{rema}
In (\ref{penrose1}), the scalar curvature $R$ could be replaced by other high order  curvature tensor of order small than $k$ which establishes a relationship between the mass $m_{GBC}$ and more geometric objects.
\end{rema}

\begin{rema} We remark that  when $k=1$, our mass $m_1=m_{ADM}$. In this case the Penrose inequality in Theorem \ref{thm} is
\[m_1\ge \left(\frac {|\partial \Omega|}{\omega_{n-1}}\right)^{\frac {n-2}{n-1}},\]
which was already proved in \cite{J}. In fact, our Penrose inequality is motivated by his work.
Note that  we have taken a different test function comparing with the paper \cite{J}.
\end{rema}

Let $\Omega_i$ be the components of $\Omega$, $i=1,\cdots l$, and let $\Sigma_i=\partial \Omega_i$. If we assume that each $\Sigma_i$ is a horizon, we have the following
\begin{coro} With the same condition of Theorem \ref{thm}, and the additional condition that each $\Sigma_i$ is a horizon
  Then we have the the following Penrose type inequality
\begin{eqnarray*}
m_k
&\ge  &  \frac{(n-2k)!}{2^k(n-1)!\;\omega_{n-1}} \int_{\cM} e^{(n-2k)u}L_k(g)dvol_g\\
&& + \frac{n-2k}{2^k}\int_{\cM} e^{(n-2k)u}|\nabla u|_g^{2k}dvol_g+\sum_{i=1}^l \left(\frac{|\Sigma_i|}{\omega_{n-1}}\right)^{\frac{n-2k}{n-1}}\\
& \ge & \sum_{i=1}^l  \left(\frac{|\Sigma_i|}{\omega_{n-1}}\right)^{\frac{n-2k}{n-1}}
\ge \left(\frac{\sum_{i=1}^l|\Sigma_i|}{\omega_{n-1}}\right)^{\frac{n-2k}{n-1}}.
\end{eqnarray*}
Moreover, if we assume the second fundamental form of $\p\Omega$ is in the cone $\Gamma_{2k-1}$ $(k\geq 2)$, we have
\begin{eqnarray*}
m_k
&\ge  &   \frac{(n-2k)!}{2^k(n-1)!\; \omega_{n-1}} \int_\cM e^{(n-2k)u}L_k(g)dvol_g\\
&& + \frac{n-2k}{2^k}\int_{\cM} e^{(n-2k)u}|\nabla u|_g^{2k}dvol_g+\sum_{i=1}^l   \left(\frac{\int_{\Sigma_i}R}{(n-1)(n-2)\;\omega_{n-1}}\right)^{\frac{n-2k}{n-3}}\\
&\ge&  \sum_{i=1}^l   \left(\frac{\int_{\Sigma_i}R}{(n-1)(n-2)\omega_{n-1}}\right)^{\frac{n-2k}{n-3}}
\ge   \left(\frac{\sum_{i=1}^l \int_{  \Sigma_i}R}{(n-1)(n-2)\omega_{n-1}}\right)^{\frac{n-2k}{n-3}}.
\end{eqnarray*}
Here $R$ is the scalar curvature of $\p\Omega$ as a hypersurface in  $\R^n$.
\end{coro}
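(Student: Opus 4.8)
The plan is to reproduce the proof of Theorem~\ref{thm} line by line, the only structural modification being that the single boundary integral over $\partial\Omega$ becomes a finite sum over the components $\Sigma_i=\partial\Omega_i$. First I would apply Proposition~\ref{prop1.1} and Green's formula on $B_r\setminus\Omega$ for $r$ large; since $\partial(B_r\setminus\Omega)=S_r\cup\bigcup_{i=1}^l\Sigma_i$ this gives
\[
\int_{S_r}(T_{k-1}(D^2u))^{ij}u_j\nu_i\,dS-\sum_{i=1}^l\int_{\Sigma_i}(T_{k-1}(D^2u))^{ij}u_j\nu_i\,dS=k\int_{B_r\setminus\Omega}\s_k(D^2u)\,dx,
\]
with $\nu$ the unit normal pointing toward infinity on each piece. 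Letting $r\to\infty$ and using the decay (\ref{decay}) turns the $S_r$-term into a multiple of $m_k$ as in (\ref{equv.form})--(\ref{eq_b}), while the bulk term is handled with no change: writing $D^2u=A+B$ with $B=\frac{|\nabla u|^2}{2}I-du\otimes du\in\Gamma_k^+$, the inequality (\ref{eq_a1}) gives $\s_k(D^2u)\ge\s_k(A)+\s_k(B)$, and together with $L_k=2^kk!\frac{(n-k)!}{(n-2k)!}e^{2ku}\s_k(A)$ and $\s_k(B)=\frac{(n-1)!(n-2k)}{2^kk!(n-k)!}|\nabla u|^{2k}$ this produces, exactly as in (\ref{massk}),
\[
m_k\ \ge\ \frac{(n-2k)!}{2^k(n-1)!\,\omega_{n-1}}\int_{\cM}e^{(n-2k)u}L_k(g)\,dvol_g+\frac{n-2k}{2^k}\int_{\cM}e^{(n-2k)u}|\nabla u|_g^{2k}\,dvol_g+\frac{(k-1)!(n-2k)!}{(n-1)!\,\omega_{n-1}}\sum_{i=1}^l\int_{\Sigma_i}(T_{k-1}(D^2u))^{ij}u_j\nu_i\,dS,
\]
the first two terms being nonnegative under $L_j(g)\ge0$ for $j\le k$.

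Next I would analyze each $\Sigma_i$ separately, repeating verbatim the boundary computation of Theorem~\ref{thm}. Minimality of $\Sigma_i$ localizes the horizon identity (\ref{eq3.1}) to $\Sigma_i$, giving $\s_1(L_i)=H_i=(n-1)\langle\nabla u,\nu\rangle$ there, and $\langle\nabla u,\nu\rangle>0$ because $L_i\in\Gamma_{k-1}^+(\Sigma_i)$; constancy of $u$ on $\Sigma_i$ gives $D^2u(e_\alpha,e_\beta)=\langle\nabla u,\nu\rangle L_i(e_\alpha,e_\beta)$ in tangential directions, and hence, as in (\ref{eq3.3})--(\ref{eq3.3bis}), $(T_{k-1}(D^2u))^{ij}u_j\nu_i=(n-1)^{-k}\s_1(L_i)^k\s_{k-1}(L_i)$ on $\Sigma_i$. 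Applying the Newton--MacLaurin inequalities (\ref{N-M1})--(\ref{N-M2}), H\"older's inequality and the Aleksandrov--Fenchel inequality to the convex hypersurface $\Sigma_i$ exactly as in Theorem~\ref{thm} then yields
\[
\frac{(k-1)!(n-2k)!}{(n-1)!\,\omega_{n-1}}\int_{\Sigma_i}(T_{k-1}(D^2u))^{ij}u_j\nu_i\,dS\ \ge\ \left(\frac{|\Sigma_i|}{\omega_{n-1}}\right)^{\frac{n-2k}{n-1}},
\]
and, under the stronger hypothesis $L_i\in\Gamma_{2k-1}$ with $k\ge2$,
\[
\frac{(k-1)!(n-2k)!}{(n-1)!\,\omega_{n-1}}\int_{\Sigma_i}(T_{k-1}(D^2u))^{ij}u_j\nu_i\,dS\ \ge\ \left(\frac{\int_{\Sigma_i}R}{(n-1)(n-2)\,\omega_{n-1}}\right)^{\frac{n-2k}{n-3}},
\]
where $R=2\s_2(L_i)$ is the scalar curvature of $\Sigma_i$ as a hypersurface in $\R^n$.

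Finally I would sum these estimates over $i=1,\dots,l$ and substitute into the lower bound for $m_k$ above, which gives the first two inequalities in each of the two chains. The last inequality in each chain, $\sum_{i=1}^l t_i^{\,p}\ge\bigl(\sum_{i=1}^l t_i\bigr)^{p}$ with $t_i=|\Sigma_i|/\omega_{n-1}$ (resp.\ $t_i=\int_{\Sigma_i}R/((n-1)(n-2)\omega_{n-1})$), is simply the superadditivity of $t\mapsto t^{p}$ on $[0,\infty)$ for $0<p\le1$: here $p=\frac{n-2k}{n-1}\in(0,1)$ always, and $p=\frac{n-2k}{n-3}\in(0,1]$ precisely when $k\ge2$, the standing hypothesis for the second chain. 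I do not expect any genuine obstacle, since the whole argument is local to each component $\Omega_i$; the only points needing a word of care are the orientation of $\nu$ on each $\Sigma_i$ in Green's formula and the fact that ``$u$ constant on $\partial\Omega$'' is used only component by component, so the boundary values of $u$ on distinct $\Sigma_i$ need not coincide.
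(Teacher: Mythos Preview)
Your proposal is correct and follows exactly the approach the paper has in mind: the corollary is stated without proof precisely because it is obtained by repeating the proof of Theorem~\ref{thm} with the single boundary integral over $\partial\Omega$ replaced by the sum over the components $\Sigma_i$, and then appealing to the subadditivity $(\sum t_i)^p\le\sum t_i^{\,p}$ for $0<p\le1$. One cosmetic point: you refer to ``the convex hypersurface $\Sigma_i$'', but under the hypotheses of Theorem~\ref{thm} each $\Omega_i$ is only assumed star-shaped with second fundamental form in $\Gamma_{k-1}^+$; the Aleksandrov--Fenchel inequality you invoke is the Guan--Li version \cite{GL} valid in that generality, so the argument still goes through.
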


\begin{exam}\label{Schwarzchild}
$(\cM^n=I\times {\mathbb S}^{n-1},g)$ with coordinates $(\rho,\theta),$ general Schwardschild metrics are given
$$g^k_{\rm Sch}=(1-\frac{2m}{\rho^{\frac nk-2}})^{-1}d\rho^2+\rho^2d\Theta^2,$$
where $d\Theta^2$ is the round metric in ${\mathbb S}^{n-1},$ $m\in \mathbb{R}$ is the ``total mass" of corresponding black hole solutions in the  Lovelock gravity \cite{CTZ, CO}.
When $k=1$ we recover the Schwarzschild solutions of the Einstein gravity.
\end{exam}
Motivated by the Schwarzschild solutions, the above metrics also have the following form of conformally flat which is more convenient for computation (\cite{GWW}).
$$g^k_{\rm Sch}=(1-\frac{2m}{{\rho}^{\frac nk-2}})^{-1}\rho^2+\rho^2d\Theta^2=(1+\frac{m}{2r^{\frac nk-2}})^{\frac{4k}{n-2k}}(dr^2+r^2d\Theta^2).$$

For this metric the Gauss-Bonnet-Chern mass $m_k=m^k$ (one can check it by (\ref{mkspherical}) below) and the black hole (i.e. the horizon) $\Sigma=\partial \Omega=\{r=r_0=(\frac m2)^{\frac k{n-2k}}\} $ and its
area is
$$|\Sigma|= \omega_{n-1}r_0^{n-1},$$
 hence
\begin{eqnarray*}
m_k&=&m^k=(2r_0^{\frac{n-2k}{k}})^k\\
&=&{2^k}\left( \frac{|\Sigma|}{\omega_{n-1}}\right)^{\frac{n-2k}{n-1}}= \frac 1{2^k}\left( \frac{|\Sigma|_{g^k_{\rm Sch}}}{\omega_{n-1}}\right)^{\frac{n-2k}{n-1}}.
\end{eqnarray*}

We remark that the Penrose inequality in Theorem \ref{mainthm3} is not optimal, since in Theorem \ref{mainthm3} the area of $\Sigma$ is computed with
the Euclidean metric  $\delta$, not with the metric $g=e^{-2u}\delta$ itself. In general, if $(\cM^n,g)$ is spherically symmetric, we have the following result.
\begin{prop}\label{spherical symmetric}
Suppose $(\cM^n,g)$ is asymptotically flat CF manifold  with $g=e^{-2u(r)}\delta,$ 
ie., $(\cM^n,g)$ is spherically symmetric, 
then
\begin{equation}\label{mkspherical}
m_k=\lim_{r\rightarrow\infty}\frac{1}{\omega_{n-1}}\int_{S_r}\frac{(u_r)^k}{r^{k-1}}dS_r.
\end{equation}
If $k$ is even, we always have $m_k\geq 0$.
\end{prop}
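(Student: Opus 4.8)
The plan is to start from the equivalent expression (\ref{equv.form}) for $m_k$ and to exploit the spherical symmetry so that the Newton-tensor contraction becomes completely explicit. Write $e_n=e_r=x/r$ for the radial unit field and, near a given point, complete it with a local orthonormal frame $e_1,\dots,e_{n-1}$ tangent to the Euclidean sphere $S_r$. Since $u=u(r)$ we have $\nabla u=u_r\,e_r$, so that on $S_r$ one has $u_j\nu_i=u_r\,x_ix_j/r^2$; hence contracting any symmetric $2$-tensor against $u_j\nu_i$ simply extracts $u_r$ times its radial-radial component.

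Next I would record the eigenstructure. For $u=u(r)$ the Hessian $D^2u$ is diagonal in this frame, with eigenvalue $u_{rr}$ in the radial direction and eigenvalue $u_r/r$ of multiplicity $n-1$ in the tangential directions. Thus the Newton tensor $T_{k-1}(D^2u)$ of (\ref{Newtondef}) is also diagonal, and its radial-radial entry is $\partial\sigma_k/\partial\lambda_n$, i.e. $\sigma_{k-1}$ of the $n-1$ equal tangential eigenvalues, namely $\binom{n-1}{k-1}(u_r/r)^{k-1}$. Combining with the previous remark,
\[
(T_{k-1}(D^2u))^{ij}u_j\nu_i=\binom{n-1}{k-1}\,\frac{u_r^{\,k}}{r^{k-1}}\qquad\text{on }S_r .
\]
(Equivalently one may contract the Schouten tensor $A$ of (\ref{schouten}) directly; its radial eigenvalue is $u_{rr}+u_r^2/2$ and its tangential eigenvalue is $u_r/r-u_r^2/2$, which only produces the extra factor $(1-ru_r/2)^{k-1}$, and $ru_r=O(r^{-\tau})\to0$.)

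Now I would substitute this into (\ref{eq_a}) — which, exactly as in the proof of Theorem \ref{mainthm1}, already absorbs the error of replacing $T_{k-1}(A)$ by $T_{k-1}(D^2u)$ under the decay hypothesis (\ref{decay}) and $\tau>\tfrac{n-2k}{k+1}$. Since the integrand is constant on each $S_r$,
\[
m_k=\lim_{r\to\infty}\frac{(k-1)!\,(n-k)!}{(n-1)!\,\omega_{n-1}}\,\binom{n-1}{k-1}\int_{S_r}\frac{u_r^{\,k}}{r^{k-1}}\,dS_r=\lim_{r\to\infty}\frac{1}{\omega_{n-1}}\int_{S_r}\frac{u_r^{\,k}}{r^{k-1}}\,dS_r ,
\]
because $\binom{n-1}{k-1}\,\tfrac{(k-1)!\,(n-k)!}{(n-1)!}=1$; this is (\ref{mkspherical}). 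The limit exists since $m_k$ is a well-defined geometric invariant under the standing assumptions ($\tau>\tfrac{n-2k}{k+1}$ and $L_k$ integrable). Finally, when $k$ is even the integrand $u_r^{\,k}/r^{k-1}$ is pointwise nonnegative, so each sphere integral is $\ge0$ and hence so is its limit $m_k$.

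There is no deep obstacle here: the only points requiring care are the bookkeeping of the factorial and binomial constants and the verification that the approximation errors (replacing $A$ by $D^2u$, and dropping the $u_r^2$ corrections) vanish in the limit, both of which are controlled by (\ref{decay}) precisely as in Theorem \ref{mainthm1}. I would also remark that (\ref{mkspherical}) identifies $m_k$ with $\lim_{r\to\infty}r^{\,n-k}u_r^{\,k}$, which makes the even-$k$ positivity transparent and is consistent with the Schwarzschild computation in Example \ref{Schwarzchild}.
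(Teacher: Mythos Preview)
Your proof is correct and follows essentially the same route as the paper's: both compute the radial-radial entry of $T_{k-1}(D^2u)$ as $\sigma_{k-1}$ of the tangential block $\bigl(\tfrac{u_r}{r}\delta_{\alpha\beta}\bigr)$, obtaining $\binom{n-1}{k-1}(u_r/r)^{k-1}$, and then cancel the constants against those in (\ref{eq_a}). The only cosmetic difference is that the paper phrases this via the level-set formulas (\ref{eq3.2})--(\ref{eq3.3}) (with $L=\tfrac{1}{r}I$ on $S_r$), whereas you read off the Hessian eigenvalues directly; the content is identical.
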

\begin{proof}
We adopt the equivalent form (\ref{eq_a}) to calculate the Gauss-Bonnet-Chern mass. Denote the radial derivative of $u$ by $u_r\triangleq\frac{\partial u}{\partial r}$. We consider $\Omega=B_r$  being the ball centered at the origin with radius equal to $r$. Thus $\Omega$ can be seen as a level set of $u$ which enable us to use the formulae in the proof of Theorem \ref{thm}.
Let $(e_1,\cdots,e_{n-1})$ be an orthonormal basis of tangent plane on the boundary $\p\Omega$. It follows from (\ref{eq3.2}) that for all $1\le \alpha,\beta\le n-1$, we have
$$
D^2 u(e_\alpha,e_\beta)=\frac{u_r}{r}\delta_{\alpha\beta}
$$
since the second fundamental form on $\p\Omega=S_r$ is equal to $\frac{1}{r}I$ where $I$ is the identity map. By (\ref{eq3.3})
we have
$$T_{k-1}(D^2 u)^{ij}u_i\nu_j
=\frac{(n-1)\cdots(n-k+1)}{(k-1)!r^{k-1}}u_r^k.$$
Going back to (\ref{eq_a}), we get the desired result (\ref{mkspherical}).
\end{proof}

\end{document}